\newtheorem{theorem}{Theorem}
\newtheorem{lemma}{Lemma}
\newtheorem{corollary}{Corollary}
\newtheorem{remark}{Remark}
\DeclareMathOperator{\im}{ran}
\DeclareMathOperator*{\spanof}{span}
\def\divdifsymb{\scaleobj{1.2}{\mathord{\kern.34em\vrule width.6pt height6.3pt depth-.28pt \kern-.34em\Delta}}}
\newcommand{\setof}[1]{\left\{{#1}\right\}}
\newcommand{\cset}[2]{\setof{#1\,:\,#2}}
\newcommand{\ivcc}[2]{\left[#1,#2\right]}
\newcommand{\ivu}{\ivcc{0}{1}}
\newcommand{\abs}[1]{\left|#1\right|}
\newcommand{\norm}[1]{\left\Vert#1\right\Vert}
\newcommand{\normop}[1]{\left\Vert#1\right\Vert_{op}}
\newcommand{\spacecf}{C(\ivcc{0}{1})}
\newcommand{\ballo}[2]{B({#1},{#2})}
\newcommand{\uballo}{\ballo{0}{1}}
\newcommand{\uballc}{\overline{\uballo}}
\newcommand{\Nn}{{\mathbb N}}
\newcommand{\Cc}{{\mathbb C}}
\newcommand{\id}{I}
\newcommand{\boundop}[2]{\mathcal{L}(#1,#2)}
\newcommand{\linope}[1]{\mathcal{L}(#1)}
\newcommand{\boundope}[1]{\mathcal{L}(#1)}
\newcommand{\compop}[2]{\mathcal{K}(#1,#2)}
\newcommand{\compope}[1]{\mathcal{K}(#1)}
\newcommand{\fredop}[2]{\Phi(#1,#2)}
\newcommand{\fredope}[1]{\Phi(#1)}
\newcommand{\fredmope}[1]{\Phi_{-}(#1)}
\newcommand{\weylope}[1]{\mathcal{W}(#1)}
\newcommand{\browdope}[1]{\mathcal{W}_B(#1)}
\newcommand{\ind}[1]{\mathrm{ind}(#1)}
\newcommand{\asc}[1]{\mathrm{asc}(#1)}
\newcommand{\dsc}[1]{\mathrm{dsc}(#1)}
\newcommand{\spec}[1]{\sigma(#1)}
\newcommand{\specp}[1]{\sigma_p(#1)}
\newcommand{\resolventset}[1]{\rho(#1)}
\newcommand{\resolventop}[2]{R(#1,#2)}
\newcommand{\ie}{i.\,e.}
\newcommand{\eg}{e.\,g.}
\journal{}
\renewcommand{\headrulewidth}{0.0pt}
\begin{document}

\begin{frontmatter}
  \title{Iterates of Markov operators and their limits}

  \author{Johannes~Nagler}
  \ead{johannes.nagler@uni-passau.de}

  \address{Fakult\"at f\"ur Informatik und Mathematik, Universit\"at Passau, Germany}

  \begin{abstract}
    It is well known that iterates of quasi-compact operators converge towards
    a spectral projection, whereas the explicit construction of the limiting
    operator is in general hard to obtain. Here, we show a simple method to
    explicitly construct this projection operator, provided that the fixed
    points of the operator and its adjoint are known which is often the case for
    operators used in approximation theory.

    We use an approach related to Riesz-Schauder and Fredholm theory to analyze
    the iterates of operators on general Banach spaces, while our main result
    remains applicable without specific knowledge on the underlying framework.
    Applications for Markov operators on the space of continuous functions
    $C(X)$ are provided, where $X$ is a compact Hausdorff space.
  \end{abstract}

  \begin{keyword}
     Iterates of Markov operator \sep Quasi-compactness \sep spectral theory
     %\MSC[2010] 37A30 \sep 47A35 \sep 47B65 \sep 41A36 \sep 28D05
   \end{keyword}
  
\end{frontmatter}
\fancypagestyle{pprintTitle}{%
\lhead{} \chead{}\rhead{}
\lfoot{}\cfoot{}\rfoot{{\footnotesize\itshape \hfill Preprint, \today}}
\renewcommand{\headrulewidth}{0.0pt}
}

% Many physical and biological applications can be modeled with Markov operators,
% that rely on dynamical systems with stochastic perturbations. Thereby the
% behaviour of the iterates of Markov operators has been studied extensively in
% modern ergodic theory, while in general the limiting operator is not explicitly
% given.

The behaviour of the iterates of Markov operators has been studied extensively
in modern ergodic theory, while in general the limiting operator is not
explicitly given. A comprehensive overview on limit theorems for quasi-compact
Markov operators can be found in \citet{hennion2001}. In this article, we
construct the limit of the iterates of quasi-compact operators that satisfy a
spectral condition. It will be shown under which conditions the limit exists and
how the limiting projection operator can be explicitly constructed using the
inverse of a Gram matrix. The explicit knowledge of the limiting operator is of
interest in many applications. 

This research is motivated by studying general Markov operators on the space of
continuous functions $C(X)$, where $X$ is a compact Hausdorff space.
\citet{lotz1981} has already shown uniform ergodic theorems for Markov operators
on $C(X)$. For specific classes of operators, the limiting operator has been
provided as shown for instance by \citet{kelisky1967}, \citet{karlin1970} and
\citeauthor{gavrea2011} \cite{gavrea2011,gavrea2011b}. Recently,
\citet{altomare2013} has shown a different approach using the concept of
Choquet-boundaries and results from Korovkin-type approximation theory.
\citet{altomare2014} have shown an application where they discussed differential
operators associated with Markov operators, where also the knowledge of limit of
the iterates is significant. Another application has been shown in the field of
approximation theory, where the iterates can be used to prove lower estimates
for Markov operators with sufficient smooth range, see \citet{nagler2015joc}.

It is worthwhile to mention that in most methods the limiting operator has to be
known apriori. Here, we show an elegant extension to general Banach spaces for
quasi-compact Markov operators. This extension provides a very general framework
to explicitly construct the limiting operator with a simple method without prior
knowledge of this operator. 
% , whereas the concept of the
% Choquet-boundary will be reflected as the fixpoints space of the adjoint
% operator. 
% Here, not only the existence of the limiting projection operator is wanted but also the
% concrete form is needed.

% The limit of iterates is important in many fields of mathematics, 
% especially fixed points play an important role. The concept of quasi-compactness
% says that the iterates can be well-approximated by a compact operator.
% To be more specific, a operator $T$ with spectral radius one is quasi-compact if and only 
% if there exists a compact operator $K$ such that $\norm{T^n - K} < 1$.

%  If $X$ is a Banach lattice, \citet{lotz1968} has been shown 
% for positive linear operators that the spectral radius is always contained
%  in the spectrum of the operator. If this radius equals one and the operator is
%  quasi-compact, then the operator has a non-trivial fixpoint space.
% Furthermore, the peripheral spectrum of a positive quasi-compact operator is cyclic.
% We apply these result to Markov operators, \eg, Frobenius-Perron operators, as they are
%  positive operators by definition. An excellent introduction into Banach lattices
% can be found in the book of \citet{schaefer1974}.

After an introductory example, we introduce briefly our notation and recall the
most important results that are necessary to prove our results. All of these
results are well-known and can be found, \eg, in the classical books of
\citet{ruston1986}, \citet{rudin1991}, \citet{heuser1982}. In the next section,
we discuss how the complemented subspace for some finite-dimensional eigenspace
of an operator can be expressed in terms of the corresponding projection. We
will start using the standard coordinate map to show the principle of our
approach. Using a generalized version of the coordinate map we show conditions
when the coordinate map on some eigenspace can be expressed in terms of a basis
for this eigenspace and a basis of the corresponding eigenspace of the adjoint
operator. These results are used to prove the limiting behaviour of the iterates
of quasi-compact Markov operators. %Finally, applications and examples are given.

\section{An introductory example}
We now demonstrate the simplicity of our result in a short example on
$\spacecf$, the space of continuous functions on the
interval $\ivu$. Thereby, let $n$ be a positive integer and suppose that
$\setof{x_j}_{j=1}^n$ form a partition of $\ivu$, i.e. $0 = t_1 < t_2 < \cdots < t_n = 1$.
We consider the positive finite-rank operator $T:\spacecf \to \spacecf$, 
defined for $f \in \spacecf$ by
\begin{equation}
  \label{eq:discrete_operator}
  Tf = \sum_{k=1}^n f(t_{k})p_k,
\end{equation}
where $p_1,\ldots,p_n \in \spacecf$ are positive functions that form a
partition of unity, i.e., $\sum_{k=1}^n p_k(t) = 1$ for all $t \in \ivu$. It is
easy to see that in this case $T1 = 1$ and $\normop{T} = r(T) = 1$, where $r(T)$
is the spectral radius of $T$. Besides, we assume that 
\begin{equation*}
  \sum_{k=1}^n t_{k} p_k(t) = t, \qquad t\in \ivu,
\end{equation*}
i.e., $Tf = f$ holds whenever $f$ is a linear function. From that it follows
already that $p_1(0) = p_n(1)= 1$ and $T$ interpolates at $0$ and $1$, as
\begin{equation*}
  Tf(0) = \sum_{k=1}^n f(t_k)p_k(0) = f(t_1) = f(0), \quad  Tf(1) = \sum_{k=1}^n f(t_k)p_k(1) = f(t_n) = f(1).
\end{equation*}
The introduced operator is a Markov operator, as it is a positive contraction
and $T1 = 1$ holds. Two fixed points for $T^*$ are given due to the
interpolation at $0$ and $1$. If $\delta_0,\delta_1$ denote the continuous
functionals that evaluate continuous functions at $0$ and $1$ respectively, then
$\delta_0(Tf) = \delta_0(f)$ and $\delta_1(Tf) = \delta_1(f)$ holds for all $f
\in \spacecf$.

In the following, we want to answer the question whether the limit of the
iterates $T^m$ for $m \to \infty$ exists and if so to which operator the
iterates converge. In \citet{nagler2014fr} is has been shown that the partition
of unity property, which is here equivalent to the ability to reproduce constant
functions, guarantees that $\sigma(T) \subset \uballo \cup \setof{1}$. To apply
our main result, we have to specify the fixed point spaces of $T$ and its
adjoint $T^*$. Using the partition of unity property of $T$ and the ability of
$T$ to reproduce linear functions as well as the ability to interpolate at the
endpoints of the interval $\ivu$, we derive the fixed-point spaces
\begin{align*}
  \ker(T-I) &= \cset{f \in \spacecf}{Tf = f} = \spanof(1, x),\\
  \ker(T^*-I) &= \cset{\alpha^* \in \spacecf^*}{\alpha^*(Tf) = \alpha^*(f) \text{ for all } f \in \spacecf} = \spanof(\delta_0, \delta_1).
\end{align*}
Then we consider the Gram matrix
\begin{equation*}
  G :=
  \begin{pmatrix}
    \delta_0(1) & \delta_0(x)\\
    \delta_1(1) & \delta_1(x)
  \end{pmatrix} = 
  \begin{pmatrix}
    1 & 0 \\
    1 & 1
  \end{pmatrix},
\end{equation*}
where the functionals of $\ker(T^* - I)$ operate on the fixpoints of $T$.
Indeed, this matrix is invertible with
\begin{equation*}
  A := G^{-1} =  
  \begin{pmatrix}
    1 & 0 \\
    -1 & 1
  \end{pmatrix},
\end{equation*}    
and we are able to use the coefficients $a_{11} = 1$, $a_{12} = 0$, $a_{11} = -1$, $a_{12} = 1$ 
to conclude by \autoref{thm:iterates_limit} that
\begin{equation*}
  \lim_{m \to \infty} \normop{T^m - P} = 0,
\end{equation*}
where the finite-rank projection $P:\spacecf \to \ker(T-\id)$ is defined for $f \in \spacecf$ by
\begin{align*}
  Pf &= \left(a_{11}\delta_0(f) + a_{12}\delta_1(f)\right)\cdot 1 
          + \left(a_{21}\delta_1(f) + a_{22}\delta_0(f)\right)\cdot x\\
       &= \delta_0(f)\cdot 1 + \delta_1(f) - \delta_0(f)\cdot x = f(0) + \left(f(1) - f(0)\right)x.
\end{align*}
The iterates converge to the linear interpolation operator that
interpolates at the endpoints of $\ivu$.  In this example we
demonstrated the underlying framework for finite-rank operators that
reproduce constant and linear functions. Operators of this kind are,
\eg, the Bernstein and the Schoenberg operator that are often used in
CAGD and approximation theory.  However, the convergence is guaranteed
for all quasi-compact Markov operators. Note that the following implications hold:
 \begin{equation*}
   \text{finite-rank}\quad\Rightarrow\quad\text{compact}\quad
   \Rightarrow\quad\text{Riesz}\quad\Rightarrow\quad\text{quasi-compact}
 \end{equation*}
 
\section{Notation}
For the convenience of the reader this section provides not only the used
notation throughout this article but also a compact overview over the most
important facts that are used later. All results in this chapter can be found in
the comprehensive books of \citet{heuser1982}, \citet{rudin1991} and
\citet{ruston1986}.

The general setting considers $X$ as a complex Banach space equipped with a norm
$\norm{\cdot}_X$. If the used norm is unambiguous we will just use the
abbreviated version $\norm{\cdot}$. Note that the results shown here are also
applicable on real Banach spaces using a standard complexification scheme as
outlined, e.g, in \citet[pp.~7--16]{ruston1986}.

The Banach algebra of bounded linear operators on $X$ is denoted by
$\boundope{X}$ equipped with the usual operator norm $\normop{\cdot}$. The
identity operator on $X$ is $\id \in \linope{X}$. The corresponding topological
dual space $\boundop{X}{\Cc}$ is denoted by $(X^*, \norm{\cdot}_{X^*})$. The
\emph{range} and \emph{null space} of $T \in \boundope{X}$ is denoted by
$\im(T)$ and $\ker(T)$, respectively. The \emph{closure} of $M \subset X$ is
denoted by $\overline{M}$. We denote the space of all compact operators from $X$
to $Y$ by $\compop{X}{Y}$.

\subsection{Annihilators}
For $M \subset X$ and $\Lambda \subset X^*$. 
we denote by $M^\bot$ the \emph{annihilator} of $M$, \ie
\begin{equation*}
  M^\bot := \cset{x^* \in X^*}{x^*(x) = 0 \text{ for every } x \in M} \subset X^*,
\end{equation*}
and by $\Lambda_\bot$ the \emph{pre-annihilator} of the set $\Lambda$, \ie
\begin{equation*}
  \Lambda_\bot = \cset{x \in X}{x^*(x) = 0 \text{ for every } x^* \in \Lambda} \subset X.
\end{equation*}
Recall that if \(X\) and \(Y\) are Banach spaces and \(T \in \boundop{X}{Y}\),
then
\begin{equation}
  \label{eq:prel_relation_annihilator_nullspace}
  \ker(T^*) = T(X)^\bot, \quad \ker(T) = T^*(Y^*)_\bot,
\end{equation}
where $T^*$ denotes the \emph{adjoint} of $T$.

\subsection{Fredholm, Weyl and Browder operators}
\label{ssec:compact_fredholm_riesz}
An operator $T \in \linope{X}$ is said to have \emph{finite ascent} if there
exists \(k \in \Nn\) such that $\ker(T^k) = \ker(T^{k+1})$. The smallest integer
with this property is the \emph{ascent} of $T$ and will be denoted by $\asc{T}$.
Accordingly, $T$ has \emph{finite descent} if there exist $k \in \Nn$ such that
$\im(T^k) = \im(T^{k+1})$ and we denote by $\dsc{T}$ the smallest integer
with this property and call this number the \emph{descent} of $T$. Recall, that
\begin{equation}
  \label{eq:finite_asc-desc_properties}
  \asc{T} \leq m < \infty\quad \text{iff}\quad \ker(T^n) \cap \im(T^m) = \setof{0}
\end{equation}
holds, where $n > 0$ is arbitrary.
% the ascent and the descent of a linear operator are related with complemented
% sum decompositions in the following way:
% \begin{proposition}[{\cite[Prop. 38.1 and 38.2]{heuser1982}}]
%   \label{prop:finite_asc-desc_properties}
%   Let \(T \in \linope{X}\) and let \(m \geq 0\) be a integer. Then
%   \begin{enumerate}
%   \item ,
%   \item $\dsc{T} \leq m < \infty$ if and only if \(X = \ker(T^m)  + \im(T^n)\).
%   \end{enumerate}
%   In both statements the integer \(n > 0\) can be chosen arbitrarily.
% \end{proposition}
If both, the ascent and the descent, are finite, then they are
equal. In this case, the operator $T$ is said to have \emph{finite chain length} $p$
and yields a direct sum decomposition in the following way:
  \begin{equation}
    X = \im(T^p) \oplus \ker(T^p).
  \end{equation}
% \end{proposition}
We will ask later when this space decomposition can be derived by so called
spectral projections.

We denote by $\alpha(T) := \dim(\ker(T))$ the \emph{nullity} of $T$ and by 
$\beta(T) := \dim(\ker(T^*))$ the \emph{deficiency} of the operator $T$.
We denote by $\fredop{X}{Y}$ the set of Fredholm operators, i.e. all operators
where the nullity and the deficiency are finite, and by $\ind{T} = \alpha(T) -
\beta(T)$ we denote the \emph{index} of $T$.

Now we can relate the concept of Fredholm operators, \ie, the nullity and the
deficiency, with the concept of the ascent and descent. Recall that if \(\asc{T}
< \infty\), then \(\alpha(T) \leq \beta(T)\) and if \(\dsc{T} < \infty\), then
\(\alpha(T) \geq \beta(T)\) holds.
% \begin{proposition}[{\cite[Prop. 38.5 and 38.6]{heuser1982}}]
%   \label{prop:relation_Fredholm_chain-length}
%   If \(T\) is a linear operator on a Banach space \(X\) then the following
%   properties hold:
%   \begin{enumerate}
%   \item 
%   \item If \(\asc{T} = \dsc{T} < \infty\), then \(\alpha(T) = \beta(T)\)\quad (possibly infinite).
%   \item If \(\alpha(T) = \beta(T) < \infty\), and if either \(\asc{T}\) or \(\dsc{T}\) is finite,
%     then \(\asc{T} = \dsc{T}\).
%   \end{enumerate}
% \end{proposition}
According to this relation, we identify all Fredholm operators $T \in
\fredope{X}$ with finite ascent, $p = \asc{T} < \infty$, as operators where
\begin{equation*}
  \dim(\ker(T)) \leq \dim(\ker(T^*)) < \infty.
\end{equation*} 
We will denote all such linear operators $T$ that have a Fredholm index less or
equal than zero by the set $\fredmope{X}$. % Operators of this kind will play an
% important role in the next section, where we consider the iterates of an
% operator and its corresponding fixed point space. To obtain the limiting
% operator, it will be finally shown that operators with index $0$ are the
% appropriate choice. 
A bounded operator $T \in \linope{X}$ is said to be a \emph{Weyl operator} if
$T \in \fredope{X}$ with index $0$. The class of all Weyl operators on
$X$ will be denoted by $\weylope{X}$.

% Fredholm operators with index $0$ have special spectral properties.
% \citet{Weyl:1909} has considered in \citeyear{Weyl:1909} operators where the
% spectrum can be partitioned into a set of isolated eigenvalues of finite
% multiplicity and a remaining set. Accordingly, Fredholm operators with index $0$
% are named after \citet{Weyl:1909}.
% \begin{definition}[Weyl Operator]
%\end{definition}
\begin{table}[tb]
  \centering
  \begin{tabular}[t]{lccc}
    \toprule
                & $\im(T)$ closed & $\alpha(T) = \beta(T) < \infty$ & $\asc{T} = \dsc{T} < \infty$\\ \midrule
    $T$ Fredholm & yes     & not necessarily & not necessarily \\
    $T$ Weyl     & yes     & yes             & not necessarily \\
    $T$ Browder  & yes     & yes             & yes              \\
    \bottomrule
  \end{tabular}
  \caption{Comparison between Fredholm, Weyl and Browder operators.}
  \label{tab:browder_overview}
\end{table}

A bounded operator $T \in \linope{X}$ is said to be a \emph{Browder operator} if
it is a Fredholm operator with finite chain length. We will denote the sets of
all Browder operators on $X$ by $\browdope{X}$. Each Browder operator $T$ is in
fact a Weyl operator, as in that case $\asc{T} = \dsc{T} < \infty$ and
$\alpha(T) = \beta(T) < \infty$ holds.
A comparison between both classes is shown in \autoref{tab:browder_overview}.
Note that due to the finite chain-length $p$ of a Browder
operator $T$, the following properties hold:
\begin{enumerate}
\item $\dim(\ker(T)) = \dim(\ker(T^*)) < \infty$,
\item $X = \ker(T^p) \oplus \im(T^p)$.
\end{enumerate}
In this article, we are interested in operators $T \in \linope{X}$ where $(T -
\lambda I)$ is a Browder operator. In that case, we construct an explicit
projection for the space decomposition shown in the second item.

\subsection{Spectral projections}
We denote by $\resolventset{T}$ the \emph{resolvent set} of $T \in
\boundope{X}$. The \emph{resolvent} of $T$ corresponding to $\lambda \in \Cc$
will be denoted by $\resolventop{T}{\lambda} := (T - \lambda I)^{-1}$. The
\emph{spectrum} of $T$ is denoted by $\spec{T}$, the spectral radius by $r(T)$.

Using functional calculus, it is well known that spectral projections exactly
provide the space decomposition discussed previously. The \emph{spectral projection}
associated to a spectral set $\sigma$ is given by
\begin{equation}
  \label{eq:riesz_spectralpoj-def}
  P_{\sigma} := \frac1{2\pi i} \int_{{\Gamma_\sigma}}\resolventop{T}{\lambda} \mathrm{d}\lambda,
\end{equation}
where $\Gamma_\sigma$ is a simple, closed integration path oriented
counterclockwise that lies in the resolvent set $\resolventset{T}$ and
encloses $\sigma$. Recall, that $\lambda$ is a pole of the resolvent of $T$ if
and only if $T - \lambda I$ has positive finite chain length $p$ which also is
the order of the pole. In this case $\lambda \in \specp{T}$, \ie, $\lambda$ is
an eigenvalue of $T$. The spectral projector $P_{\setof{\lambda}}$ corresponding
to $\setof{\lambda}$ satisfies
\begin{equation}
   \label{eq:riesz_spectral-projection-browder}
  \im(P_{\setof{\lambda}}) = \ker(T - \lambda I)^p\quad\text{and}\quad
  \ker(P_{\setof{\lambda}}) = \im(T - \lambda I)^p.
\end{equation}
If furthermore $T - \lambda I$ is a Fredholm operator, i.e. a Browder operator,
then $\lambda$ is always an isolated eigenvalue of $T$ and the associated
spectral projection is finite-dimensional.

Note that the computation of the spectral projection using the formula provided
in \eqref{eq:riesz_spectralpoj-def} is in general hard to calculate. In the next
section, we will consider operators $T$ where $T - \lambda I$ is a Browder
operator and explicitly construct the corresponding spectral projection
$P_{\setof{\lambda}}$.
% \autoref{tab:riesz_overview} shows that quasi-compact operators with $\specrad{T} = \normop{T} = 1$
% guarantee that $T - \lambda I$ is a Browder operator for every peripheral spectral value $\lambda$.
% By the preceding propositions, we can conclude that $\lambda$ is an isolated eigenvalue
%  and $X = \ker(T - \lambda I)^p \oplus \im(T - \lambda I)^p$ where $p$ is the ascent of $T - \lambda I$.
% Compare this result with \autoref{prop:space_decomposition_ascent} and \autoref{prop:riesz_quasicomp_peripheral_eigenvalues} and note that $\ker(T - \lambda I)^p$ is finite-dimensional.

\section{Invariant subspaces of linear operators}
% Space decompositions into nullspace and the range of operators have a long
% history. The Riesz-Schauder theory of compact operators showed that
% each compact operator $T$ on a Banach space $X$ yields a space decomposition 
% of the form $X = \ker(t - \lambda i)^p \oplus \im(T - \lambda
% I)^p$, where $\lambda \in \Cc$ and $p > 0$.

% We are interested in criteria when a space decomposition of the following form exists,
% \begin{alignat*}{2}
%   X &= \ker(T - \lambda I)^p\quad &&\oplus\quad \im(T - \lambda I)^p,
% \end{alignat*}
% where $X$ is a complex Banach space and $T$ is a bounded linear operator on $X$, $\lambda \in \Cc$.

% The aim of this section is to construct a projection $P$ onto the
% generalized eigenspace of a bounded linear operator $T$ with $\im(T - \lambda
% I)^p$ as null space. To this end, we will consider an operator $T$ such that the
% eigenspaces $\dim(\ker(T - \lambda I))$ and $\dim(\ker(T^* - \lambda I))$ have
% finite dimensions. Finally, we provide a simple criterion under which
% assumptions this space decomposition is possible.

The aim of this section is to show how to construct a
projection $P$ onto a generalized eigenspace of a bounded linear operator $T$
 defined on a complex Banach space $X$ corresponding to an eigenvalue $\lambda \in \Cc$.
To this end, we consider an operator $T$ such that $T - \lambda I$ is a Browder operator
with ascent $\asc{T - \lambda I} = p$. In this case, the projection has the property 
$\ker(P) = \im(T - \lambda I)^p$ which gives us generically the following space decomposition:
\begin{alignat*}{2}
  X &= \ker(T - \lambda I)^p\quad &&\oplus\quad \im(T - \lambda I)^p\\
  &= \quad \im(P) &&\oplus\quad\quad \ker(P).
\end{alignat*}
We provide a simple criterion under which assumptions this space
decomposition is possible. Before we will look at a finite-dimensional
generalized eigenspace of an operator $T \in \linope{X}$, we will construct the
projection on an arbitrary finite-dimensional subspace $M$ of a vector
space $X$. On $M$ we introduce the classical coordinate map defined by
a basis of $M$ and the corresponding dual basis of the dual space
$M^*$. By the extension theorems of Hahn-Banach the coordinate map
gives us a continuous projection of $X$ onto $M$. In the sequel, we
will discuss conditions on the functionals that can be chosen in the
coordinate map to build a dual basis. Finally, we apply the results to
the generalized eigenspaces of a bounded linear operator $T$ on a
Banach space $X$ and its adjoint $T^*$ corresponding to an eigenvalue
$\lambda \in \Cc$. A necessary condition on the operator
$T - \lambda I$ is being Fredholm with non-positive index. If in
addition $T - \lambda I$ is a Browder operator, \ie, the index is zero
and its chain length is finite, then the projection yields the
previously mentioned direct sum decomposition of $X$.

Note that this space decomposition is already well known, see
\eqref{eq:riesz_spectral-projection-browder}, provided $T - \lambda I$ has
positive finite chain length. In contrast to existing literature we prove it
using an explicitly constructed finite-rank projection $P$. This method uses in
fact the restriction that $T - \lambda I$ has to be Weyl operator, \ie, a
Fredholm operator of index zero, to guarantee that the corresponding generalized
eigenspaces of $T$ and $T^*$ have finite dimension. This direct construction of
the projection $P$ provides an alternative way to calculate the spectral
projection corresponding to the eigenvalue $\lambda$ as by
\eqref{eq:riesz_spectralpoj-def}.

\subsection{Dual basis and the coordinate map}
Let $X$ be a normed vector space over the complex numbers and let $M \subset X$ 
be a closed subspace with $0 < \dim(M) < \infty$. In the sequel, 
we denote its dimension by $n = \dim(M)$.
Moreover, let $\setof{e_1,\ldots,e_n}$ be a basis for $M$. 
Then every $x \in M$ has a unique representation
\begin{equation}
  \label{eq:lincomb_basis}
  x = \sum_{i = 1}^n a^*_i(x)e_i,
\end{equation}
where $\setof{a^*_1,\ldots,a^*_n}$ are appropriate continuous linear
functionals on $M$. By definition, each $u_i$ can also be represented by
\eqref{eq:lincomb_basis} which yields the characterization
\begin{equation}
  \label{eq:mu_ek_identity}
  a^*_i(e_k) = \delta_{ik} =
  \begin{cases}
    1 & \text{if } i = k\\
    0 & \text{if } i \neq k,
  \end{cases}
\end{equation}
for all $i,k \in \setof{1,\ldots, n}$. In analogy to the construction of the
frame operator on Hilbert spaces \cite{christensen2001}, 
we define a \emph{synthesis operator} $\Phi:\Cc^n \to M$ by
\begin{equation}
  \label{eq:synthesis}
  \Phi(a_1,\ldots,a_n) = \sum_{i=1}^n a_i e_i.
\end{equation}
The adjoint of this operator $\Phi^*:M \to \Cc^n$ yields the \emph{analysis} operator
\begin{equation}
  \label{eq:analysis}
  \Phi^*(x) =
  \begin{pmatrix}
    a^*_1(x)\\ \vdots \\ a^*_n(x)
  \end{pmatrix},\qquad x \in X.
\end{equation}
Combining both operators we can represent the coordinate map 
\eqref{eq:lincomb_basis} by the composition $\Phi^*\Phi: M \to M$,
\begin{equation*}
  (\Phi\Phi^*)(x) = \sum_{i=1}^n a^*_i(x) e_i = x.
\end{equation*}
Note that according to \eqref{eq:mu_ek_identity} the matrix
$\Phi^*\Phi \in \Cc^{n\times n}$ is the identity on $\Cc^n$:
\begin{equation*}
  \Phi^*\Phi =
  \begin{pmatrix}
    a^*_1(e_1) & \cdots & a^*_1(e_n)\\
    \vdots & & \vdots\\
    a^*_n(e_1) & \cdots & a^*_n(e_n)    
  \end{pmatrix} = 
  \begin{pmatrix}
    1 & 0 & \cdots & 0\\
    0 & 1 & \cdots & 0\\
    \vdots &  & \ddots&\vdots\\
    0 & \cdots & 0 &1
  \end{pmatrix} = I_n.
\end{equation*}
Accordingly, the basis $\setof{a^*_1,\ldots,a^*_n} \subset M^*$ is
said to be the \emph{dual basis} for $\setof{e_1,\ldots, e_n} \subset M$.
Applying the Theorem of Hahn-Banach, the coordinate map can be
extended to the whole vector space $X$. 

\begin{figure}[tb]
  \centering
  \begin{tikzpicture}[scale=1.3, transform shape]
    \matrix (m) [matrix of math nodes, row sep=4em,
    column sep=3.5em, text height=2ex, text depth=0.25ex]
    { X & & M \\
      \Cc^n & & \Cc^n\\ };
    % \draw[double,double distance=5pt] (m-1-1) – (m-1-3);
    \path[->,font=\scriptsize]
    (m-1-1) edge node[left] {$\Phi^*$} (m-2-1)
    (m-2-3) edge node[right] {$\Phi$} (m-1-3)
    (m-1-1) edge node[auto] {$\Phi\Phi^*$} (m-1-3)
    (m-2-1) edge[->] node[below] {$(\Phi^*\Phi)^{-1} = I_n$} (m-2-3)
    (m-2-1) edge node[auto] {$\Phi$} (m-1-3);
  \end{tikzpicture}
  \caption{A commutative diagram that illustrates the projection $\Phi\Phi^*:X\to M$ as composition of 
    the synthesis operator $\Phi$ and analysis operator $\Phi^*$. 
   Note that the projection can also be written as $\Phi(\Phi^*\Phi)^{-1}\Phi^*$.}
  \label{fig:it_commutative}
\end{figure}
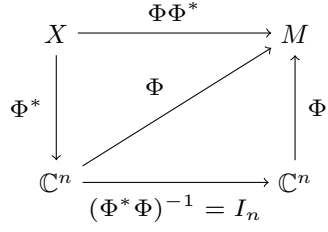

\begin{lemma}
  \label{lem:projection_bounded}
  The operator \(\Phi\Phi^*:M \to M\) can be extended to a projection of the space $X$ 
  onto the closed set $M$  and is bounded by
  \begin{equation*}
    \norm{(\Phi\Phi^*)(x)} \leq \norm{x}\sum_{i=1}^n\norm{a^*_i}.
  \end{equation*}
  The matrix $(\Phi^*\Phi)_{ij} \in \Cc^{n\times n}$ is invertible and 
  the coordinate map \(\Phi\Phi^*\big|_M:M \to M\) which is restricted on $M$
  yields an isomorphism. The space \(X\) can be decomposed into
  \begin{equation*}
    X = M \oplus \ker(\Phi\Phi^*).
  \end{equation*}
\end{lemma}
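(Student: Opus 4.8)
The plan is to construct the projection by extending the coordinate functionals off of $M$ and then to read off all four assertions from the resulting bounded idempotent. Since $M$ is finite-dimensional, every coordinate functional $a_i^*$ is automatically continuous on $M$; by the Hahn--Banach theorem I would extend each $a_i^*$ to a functional on all of $X$ with the same norm, retaining the notation $a_i^*$ for the extension. This promotes the analysis operator $\Phi^*$ to a bounded map $X \to \Cc^n$, so that $P := \Phi\Phi^* : X \to M$ becomes a bounded operator defined on the whole space whose range is contained in $M = \spanof(e_1, \ldots, e_n)$.

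Next I would check that $P$ is a projection onto $M$. By construction $P(X) \subseteq M$, and for $x \in M$ the representation \eqref{eq:lincomb_basis} gives $Px = \sum_{i=1}^n a_i^*(x) e_i = x$, so $P$ restricts to the identity on $M$. Hence $P^2 x = P(Px) = Px$ for every $x \in X$, because $Px \in M$; thus $P$ is idempotent with $\im(P) = M$. The norm estimate is then just the triangle inequality combined with the norm-preserving extension, $\norm{Px} \le \sum_{i=1}^n \abs{a_i^*(x)}\,\norm{e_i} \le \norm{x}\sum_{i=1}^n \norm{a_i^*}$, where the stated constant corresponds to a normalized basis. The invertibility of the matrix $\Phi^*\Phi$ is immediate from \eqref{eq:mu_ek_identity}, which identifies $\Phi^*\Phi = I_n$, and the restriction $\Phi\Phi^*\big|_M$ is the identity on $M$, hence an isomorphism.

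Finally, the decomposition $X = M \oplus \ker(\Phi\Phi^*)$ is the standard consequence of $P$ being a bounded idempotent with range $M$: any $x$ splits as $x = Px + (x - Px)$ with $Px \in M$ and $P(x - Px) = Px - P^2 x = 0$, and $M \cap \ker(P) = \setof{0}$ since any $y$ in the intersection satisfies $y = Py = 0$. I expect no serious obstacle in the idempotency, boundedness, or splitting arguments, which are routine. The one step that carries the real content is the Hahn--Banach extension: it is what upgrades the coordinate map from a purely algebraic identity on the finite-dimensional space $M$ to a bounded, everywhere-defined projection, and it simultaneously supplies the norm bound.
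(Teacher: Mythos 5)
Your proposal is correct and follows essentially the same route as the paper's proof: Hahn--Banach extension of the coordinate functionals, idempotency via the identity on $M$, the triangle-inequality norm bound using the normalized basis, $\Phi^*\Phi = I_n$ for invertibility, and the canonical splitting induced by a bounded idempotent. Your version is in fact slightly more explicit than the paper's, which merely asserts the decomposition $X = M \oplus \ker(\Phi\Phi^*)$ ``canonically'' where you spell out $x = Px + (x - Px)$ and the triviality of $M \cap \ker(P)$.
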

\begin{proof}
  The continuous functionals $a^*_i$ can be extended by the classical
  Hahn-Banach Theorem to $X^*$ with the same properties as on $M$. We
  denote the resulting extensions again as $a^*_i \in X^*$. Therefore,
  $\Phi\Phi^*:X \to M$ and
  \begin{equation*}
    (\Phi\Phi^*)(x) = \sum_{i=1}^n a^*_i(x) e_i = x\qquad\text{for all }x \in M.
  \end{equation*}
  Moreover, the operator is bounded on $X$ since for $x \in X$ we have
  \begin{equation*}
    \norm{(\Phi\Phi^*)(x)} = \norm{\sum_{i=1}^n a^*_i(x) e_i} \leq \sum_{i=1}^n \norm{a^*_i(x)} \norm{e_i}
    \leq \norm{x} \sum_{i=1}^n \norm{a^*_i},
  \end{equation*}
  where we used that $\norm{e_i} = 1$ and the fact that
  $\norm{a^*_i(x)} \leq \norm{a^*_i}\norm{x}$. Clearly, $(\Phi^*\Phi)$
  is invertible with $(\Phi^*\Phi)^{-1} = I_n$. It yields also a projection, 
  because for every $x \in M$ we obtain $(\Phi\Phi^*)(x) = x$ and
  therefore, $(\Phi\Phi^*)^2 = (\Phi\Phi^*)$. As the operator
  $\Phi\Phi^* \in \linope{X}$ is a bounded projection onto the closed
  space $M$, we obtain canonically the space decomposition
  $X = M \oplus \ker(\Phi\Phi^*)$.
\end{proof}
The key property to notice here is that $(\Phi^*\Phi)_{ij}$ is
an invertible matrix and that $\Phi\Phi^*$ is a projection onto $M$.
The commutative diagram shown in \autoref{fig:it_commutative} illustrates
the behaviour of $\Phi$ and $\Phi^*$.

In the following, we show which functionals can be chosen instead of the dual
basis such that $\Phi\Phi^*$ is still a projection where the analysis operator
$\Phi^*$ now contains the new functionals. The next section shows that the
matrix $\Phi^*\Phi$ must have full column rank.

\subsection{Complemented subspaces and projections}
We consider now the following problem. Given a set of linear functionals
$\Lambda \subset X^*$, we ask whether it is possible to construct a projection
onto the closed finite dimensional subspace $M \subset X$ with functionals
chosen only from the set $\Lambda$. We give a characterization in the next
theorem. As in the previous section, we consider a finite-dimensional subspace
$M$ of $X$. Additionally, let $\Lambda \subset X^*$ be a finite-dimensional
subspace of $X^*$. Let us denote by $\setof{e_1,\ldots,e_n}$ and
$\setof{e^*_1,\ldots,e^*_m}$ a basis of $M$ and $\Lambda$, respectively. The
\emph{synthesis operator} $\Phi:\Cc^n \to X$ is constructed as in
\eqref{eq:synthesis}, whereas the \emph{analysis operator} $\Phi^*:X \to \Cc^n$
is this time not defined as the adjoint of $\Phi$ but uses the basis functionals
of $\Lambda$:
\begin{equation*}
  \Phi^*(x) :=
  \begin{pmatrix}
    e^*_1(x)\\ \vdots \\ e^*_m(x)
  \end{pmatrix},\qquad x \in X.
\end{equation*}
Let us assume that $\dim(\Lambda) \geq \dim(M)$ holds. Then we
will show in the next theorem that again $\Phi\Phi^*$ 
yields a projection operator onto $M$ provided that $\Phi^*\Phi$ has full column rank.

\begin{figure}[tb]
  \centering
  \begin{tikzpicture}[scale=1.3, transform shape]
    \matrix (m) [matrix of math nodes, row sep=4em,
    column sep=3.5em, text height=2ex, text depth=0.25ex]
    { X & & M \\
      \Cc^m & & \Cc^n\\ };
    % \draw[double,double distance=5pt] (m-1-1) – (m-1-3);
    \path[->,font=\scriptsize]
    (m-1-1) edge node[left] {$\Phi^*$} (m-2-1)
    (m-2-3) edge node[right] {$\Phi$} (m-1-3)
    (m-1-1) edge node[auto] {$\Phi A\Phi^*$} (m-1-3)
    (m-2-1) edge[->] node[auto] {$A$} (m-2-3);
  \end{tikzpicture}
  \caption{Commutative diagram showing the projection $\Phi A\Phi^*:X\to M$. Here
  the matrix $A$ is either the Moore-Penrose inverse of the matrix $\Phi^*\Phi$ or its inverse.}
  \label{fig:it_commutative2}
\end{figure}

\begin{theorem}
  \label{thm:projection_arbitrary_functionals}
  Let $\Lambda \subset X^*$ with $0 < \dim(M) \leq \dim(\Lambda) < \infty$ 
  and let $n = \dim(M)$, $m = \dim(\Lambda)$.
  Then the operator $P \in \linope{X}$ defined for $A = (a_{ij}) \in \Cc^{n \times m}$ by
  \begin{equation}
    \label{eq:projection_op}
    Px  = \Phi A\Phi^*(x) = \sum_{i=1}^{n}\sum_{j=1}^m a_{ij} e^*_j(x) e_i, \qquad x \in X,
  \end{equation}
  yields a projection onto $M$ if and only if the matrix
  \begin{equation}
    \label{eq:gramian_matrix_unknown_functionals}
    G := (\Phi^*\Phi) =
    \begin{pmatrix}
      e^*_1(e_1) & \cdots & e^*_1(e_n)\\
      \vdots & & \vdots\\
      e^*_m(e_1) & \cdots & e^*_m(e_n)
    \end{pmatrix}  \in \Cc^{m \times n}
  \end{equation}
  has full column rank $n$. 
  In this case, the matrix $A$ is determined by the Moore-Penrose inverse of $G$,
  \begin{equation*}
    A = (G^TG)^{-1}G^T. 
  \end{equation*}
  % Moreover, $X$ can be decomposed into
  % \begin{equation*}
  %   X = M \oplus \ker(\Phi A \Phi^*).
  % \end{equation*}
\end{theorem}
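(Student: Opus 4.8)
The plan is to reduce the operator-theoretic statement to the single linear-algebraic identity $AG = I_n$ and then invoke the standard characterization of left-invertible matrices. First I would record two automatic facts: $\im(P) \subseteq M$, since the synthesis operator $\Phi$ maps $\Cc^n$ into $M$; and $P = \Phi A \Phi^* \in \linope{X}$, because each $e_j^* \in X^*$ is continuous and the $e_i$ are fixed vectors, so $P$ is a finite-rank, hence bounded, operator. The key reduction is that, since $\im(P)\subseteq M$, the operator $P$ is a projection onto $M$ if and only if it restricts to the identity on $M$: if $P|_M = \id$, then for general $x$ we have $Px \in M$, whence $P^2 x = P(Px) = Px$, giving idempotency, and $M = P(M) \subseteq \im(P) \subseteq M$; conversely any idempotent fixes its range, so $\im(P)=M$ forces $P|_M=\id$. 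Because $\setof{e_1,\dots,e_n}$ is a basis of $M$, the condition $P|_M = \id$ is equivalent to $Pe_k = e_k$ for each $k$.

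Next I would compute $Pe_k$ explicitly. By definition $\Phi^* e_k = (e_1^*(e_k),\dots,e_m^*(e_k))^T$ is precisely the $k$-th column of $G$, so $A\Phi^* e_k$ is the $k$-th column of the product $AG$, and applying $\Phi$ yields $Pe_k = \sum_{i=1}^n (AG)_{ik}\,e_i$. Since $\setof{e_1,\dots,e_n}$ is linearly independent, the equalities $Pe_k = e_k = \sum_i \delta_{ik} e_i$ hold for all $k$ if and only if $(AG)_{ik} = \delta_{ik}$ for all $i,k$, that is, if and only if $AG = I_n$. This identifies ``$P$ is a projection onto $M$'' with the existence of a left inverse $A$ of $G$.

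Finally I would close the equivalence with the rank condition. If $AG = I_n$ for some $A \in \Cc^{n\times m}$, then $n = \operatorname{rank}(I_n) = \operatorname{rank}(AG) \le \operatorname{rank}(G) \le n$, forcing $\operatorname{rank}(G) = n$, i.e.\ full column rank. Conversely, if $G$ has full column rank $n$, then $G^*G \in \Cc^{n\times n}$ is invertible and $A := (G^*G)^{-1}G^*$ satisfies $AG = I_n$; this $A$ is exactly the Moore-Penrose inverse of $G$, furnishing the explicit matrix claimed in the statement (over a real scalar field the plain transpose $G^T$ replaces the conjugate transpose, as written). I expect the only genuinely delicate points to be bookkeeping ones: keeping the composition $\Phi A \Phi^*$ and the shape of $G$ consistent, so that $\Phi^* e_k$ really is a \emph{column} of $G$ and $AG$ (not $GA$) is the relevant product, and noting that $A$ is not unique when $m > n$, so the theorem singles out the pseudoinverse as one canonical left inverse rather than the only one.
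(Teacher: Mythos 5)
Your proof is correct and follows essentially the same route as the paper's: both reduce the projection property to $Pe_k = e_k$ on a basis of $M$, identify this with the matrix identity $AG = I_n$, and tie the existence of such a left inverse $A$ to $G$ having full column rank. If anything, your write-up is more careful at two points the paper glosses over --- you use the conjugate transpose $(G^*G)^{-1}G^*$, which is the correct Moore--Penrose formula over $\Cc$ (the paper's $G^TG$ can be singular even for complex $G$ of full column rank), and you rightly note that $A$ is non-unique when $m>n$, whereas the paper's closing claim that $AG=I_n$ is solvable ``if and only if $G$ has a Moore--Penrose inverse'' is loose, since every matrix has one.
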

\begin{proof}
  Let us first assume that for the matrix $G \in \Cc^{m \times n}$ exists the
  Moore-Penrose inverse
  \begin{equation*}
    G_{left}^{-1} = (G^TG)^{-1}G^T \in \Cc^{n \times m},
  \end{equation*}
  and let $A = G_{left}^{-1}$. According to its definition we have
  \begin{equation}
    \label{eq:G_inverse}
    A \cdot G = G_{left}^{-1} \cdot G = I_n.
  \end{equation}
  Now, we prove that $P$, defined for $x \in X$ as in \eqref{eq:projection_op},
  is a projection onto $M$. To this end, we will show that $P(x) = x$ holds for
  all $x \in M$ by considering the basis of $M$.
  % The space decomposition follows then immediately 
  %with \autoref{lem:projection_bounded}.
%  From the definition we already have that $P(X) \subset M$. 
%  Remaining to show is that $M \subset P(X)$.
  Thus, we only have to prove $P(e_k) = e_k$ for all $k \in \setof{1,\ldots,n}$. 
  The direct calculation of $P(e_k)$ yields
  \begin{equation*}
    P(e_k) = \sum_{i=1}^n\sum_{j =1}^m a_{ij} e^*_j(e_k) e_i
           = \sum_{i=1}^n \left[\sum_{j=1}^m a_{ij} e^*_j(e_k)\right] e_i.
  \end{equation*}
  Applying \eqref{eq:G_inverse} yields $\sum_{j=1}^m a_{ij} e^*_j(e_k) = \delta_{ki}$.
  Therefore, 
  \begin{equation*}
    P(e_k) = \sum_{i=1}^n p_i \delta_{ki} = e_k
  \end{equation*}  
  holds and we obtain $P(X) = M$. 
  Furthermore, $P^2 = P$ on $X$ as $\setof{e_1,\ldots,e_n}$ forms a basis for $M$.
  Finally, we show the reverse direction. To this end, 
  let us assume that $P$ is a projection onto $M$, \ie, $P(X) = M$ and $P^2 = P$ holds. 
  Then $P(e_k) = e_k$ must hold for any $k \in \setof{1,\ldots,n}$, as $e_i \in M$. 
  We calculate
  \begin{equation*}
    P(e_k) = \sum_{i=1}^n\sum_{j=1}^m a_{ij} e^*_j(e_k) e_i 
           = \sum_{i=1}^n \left[\sum_{j=1} a_{ij} e^*_j(e_k)\right] e_i.
  \end{equation*}
  This yields necessary the requirement $\sum_{j=1}^m a_{ij} e^*_j(e_k) = \delta_{ik}$ for all 
  $k \in \setof{1,\ldots,n}$. 
  Therefore, we derive the matrix equation $A \cdot G = I_n$ 
  with the unknown coefficient matrix $A = (a_{ij}) \in \Cc^{n \times m}$.
  In fact, this equation has a solution if and only if the matrix $G$ 
  has a Moore-Penrose inverse $G_{left}^{-1}$, which concludes the proof.
\end{proof}
Next, we will provide an upper bound of the projection operator
$P = \Phi A\Phi^*$ by the $1$-norm of $A$.
\begin{lemma}
  \label{lem:P_compact}
  Under the assumption of \autoref{thm:projection_arbitrary_functionals},
  the projection operator $P$ defined by \eqref{eq:projection_op} has finite-rank 
  and is bounded by
  \begin{equation*}
    \norm{Px} \leq \norm{x}\sum_{i=1}^n\sum_{j=1}^m\abs{a_{ij}},\qquad x\in X.
  \end{equation*}
\end{lemma}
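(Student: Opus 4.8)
The plan is to read off both assertions directly from the explicit formula \eqref{eq:projection_op} for $P$. For the finite-rank claim, I would observe that every value $Px = \sum_{i=1}^n \sum_{j=1}^m a_{ij} e^*_j(x) e_i$ is a linear combination of the fixed vectors $e_1, \ldots, e_n$, so $\im(P) \subseteq M = \spanof(e_1, \ldots, e_n)$. Since $\dim(M) = n < \infty$ by hypothesis, the rank of $P$ is at most $n$, hence finite.

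For the norm bound, I would apply the triangle inequality to the same expression and estimate each scalar $e^*_j(x)$ through the functional norm. Concretely,
\begin{equation*}
  \norm{Px} = \norm{\sum_{i=1}^n \sum_{j=1}^m a_{ij} e^*_j(x) e_i}
  \leq \sum_{i=1}^n \sum_{j=1}^m \abs{a_{ij}} \, \abs{e^*_j(x)} \, \norm{e_i}.
\end{equation*}
Using the operator-norm estimate $\abs{e^*_j(x)} \leq \norm{e^*_j}\norm{x}$ together with the normalization $\norm{e_i} = 1$ and $\norm{e^*_j} = 1$ of the chosen bases of $M$ and $\Lambda$ (the same convention already invoked in the proof of \autoref{lem:projection_bounded}) collapses the right-hand side to $\norm{x} \sum_{i=1}^n \sum_{j=1}^m \abs{a_{ij}}$, which is precisely the asserted inequality.

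The argument is entirely elementary, and I do not expect any genuine obstacle: the estimate is a one-line consequence of the triangle inequality, and the finite-rank property is immediate from $\im(P) \subseteq M$. The only point worth tracking carefully is the normalization of the basis vectors and functionals, since the bound as stated suppresses the factors $\norm{e_i}$ and $\norm{e^*_j}$; making this convention explicit is what turns the crude estimate into the clean stated form.
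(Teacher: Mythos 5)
Your proposal is correct and matches the paper's own proof: both arguments read the finite-rank property off from $\im(P) \subseteq M$ and obtain the bound by the triangle inequality together with the normalization $\norm{e_i} = \norm{e^*_j} = 1$. Your explicit flagging of the normalization convention is, if anything, slightly more careful than the paper's version, which invokes it only in passing.
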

\begin{proof}
  Clearly, $P$ is a  finite rank operator. Let $x \in X$.
  For arbitrary $i \in \setof{1,\ldots,n}$ we obtain
  \begin{equation*}
    \norm{\sum_{j=1}^ma_{ij}e^*_j(x)} \leq \norm{x}\sum_{j=1}^m \abs{a_{ij}},
  \end{equation*}
  because the dual basis is normalized, \ie, $\norm{e^*_j} = 1$. Using the same 
  argument for the basis of $M$ we get
  \begin{equation*}
    \norm{Px} = \norm{\Phi A \Phi^*x} = \sum_{i=1}^{n}\sum_{j=1}^m a_{ij} e^*_j(x) e_i
    \leq \norm{x}\sum_{i=1}^n\sum_{j=1}^m\abs{a_{ij}}.
  \end{equation*}
\end{proof}

\subsection{Invariant subspaces and projections}
\label{ssec:it_invariation-subspaces}
In the following we will consider a linear operator $T$ defined on 
a complex Banach space $X$. As in the preceding sections
we are interested in the construction of a projection onto a finite-dimensional
subspace of $X$. Here, we choose $M$ as a generalized eigenspace of
$T$ corresponding to an eigenvalue $\lambda \in \sigma_p(T)$.
It will be shown that the set of functionals is exactly given by
the corresponding generalized eigenspace of the adjoint $T^*$. 

Accordingly, given some integer $p>0$, we consider now the following two subspaces
\begin{align}
  \label{eq:it-inv-fixpoints}
  &M^p_\lambda = \ker(T - \lambda I)^p = \cset{x \in X}{(T - \lambda I)^px = 0} \subset X,\\
  \label{eq:it-inv-fixpointsdual}
  &\Lambda^p_\lambda = \ker(T^* - \lambda I)^p = \cset{x^* \in X^*}{(T^* - \lambda I)^px^* = 0} 
  \subset X^*.
\end{align}
Note that due to the fact that $\ker(T^* - \lambda I)^p = \im((T-\lambda I)^p)^\bot$ holds
by \eqref{eq:prel_relation_annihilator_nullspace} 
the set $\Lambda^p_\lambda$ can also be determined as
\begin{equation}
  \label{eq:it-inv-fixpointsdual2}
  \Lambda^p_\lambda = \cset{x^* \in X^*}{x^*\left((T - \lambda I)^px\right) = 0 \text{ for all }x \in X}.
\end{equation}
To assure that both spaces \eqref{eq:it-inv-fixpoints} and
\eqref{eq:it-inv-fixpointsdual} are finite-dimensional and that the dimension of the 
functionals $\Lambda^p_\lambda$ is greater than the dimension of $M^p_\lambda$, 
we assume in the following that $(T - \lambda I)^p$ is a Fredholm operator with negative
index, \ie, $\ind{T-\lambda I}^p \leq 0$. 
%\fxsee{\emph{index} $\ind{T}$}{see page \pageref{def:fredholm_operator}} 
Then we have by definition
\begin{equation*}
  n = \dim(M^p_\lambda) \leq \dim(\Lambda^p_\lambda) = m
\end{equation*}
and we can consider w.l.o.g. normalized bases of $M^P_\lambda$ and $\Lambda^P_\lambda$:
\begin{equation}
  \label{eq:fixed-points-normalized-bases}
  M^p_\lambda = \spanof\setof{e_1,\ldots,e_n} \text{ and } \Lambda^p_\lambda = \spanof\setof{e_1^*,\ldots,e_m^*}
\end{equation}
such that $\norm{e_i}_X = 1$ and $\norm{e^*_i}_{X^*} = 1$.
If we additionally suppose we have the following finite chain of inclusions
\begin{equation*}
  \ker(T - \lambda I) \subsetneq \ker(T - \lambda I)^2 \subsetneq \cdots \subsetneq 
  \ker(T -\lambda I)^p = \ker(T - \lambda I)^{p+1} = \cdots,
\end{equation*}
then the ascent of $T - \lambda I$ is specified as $p := \asc{T - \lambda I} < \infty$.
%\fxsee{\emph{ascent} $\asc{T}$}{see page~\pageref{def:ascent_descent}}
Corresponding to the eigenvalue $\lambda \in \specp{T}$, the set $M_\lambda$
contains all of the generalized eigenvectors of the operator $T$ and
the set $\Lambda^p_\lambda$ contains all the dual
generalized eigenvectors. More precisely, the set $\Lambda^p_\lambda$
contains all the generalized eigenvectors of the adjoint operator
$T^*$ to the eigenvalue $\lambda$.

\begin{remark}
  Note that the assumption on $T$ are not very restrictive. As shown in the end of the last chapter,
  every compact operator satisfies all of the conditions. Moreover, quasi-compact operators
  satisfy these condition in the case where $\lambda = 1$ is chosen. 
  Especially, every operator where $T - \lambda I$ is a Browder operator fulfills these
  conditions, see the definition and comments in \autoref{ssec:compact_fredholm_riesz}.
\end{remark}
% where we assume w.l.o.g. that these bases are normalized, \ie, $\norm{e_i}_X = 1$ and $\Vert e_j^*\Vert_{X^*} = 1$.

% and the property
% \begin{equation*}
%   \ker(T -\lambda I)^p \cap \im(T - \lambda I)^p = \setof{0}
% \end{equation*}
% holds, see \textcite[Theorem 1.2]{lay1970}.
Next, we will show how to construct a projection $P$ onto $\ker(T-\lambda I)^p$ 
to obtain the space decomposition
\begin{equation*}
  \ker(T -\lambda I)^p \oplus \ker(P)
\end{equation*}
such that $\ker(P) = \im(T -\lambda I)^p$ holds. Note that in this
case $\im(T - \lambda I)^p$ is closed as it is the null space of the projection $P$.

First, we provide an equivalent characterization of the restrictions
on $T$  to have finite chain length of the generalized eigenspaces of $T$ 
provided that $T - \lambda I$ is a Fredholm operator with $\ind{T - \lambda I} \leq 0$ 
to assure that the generalized eigenspaces of $T$ and $T^*$ are finite-dimensional. 
The next lemma shows that the ascent can be characterized by the column rank of 
the Gramian matrix constructed using the matrix \eqref{eq:gramian_matrix_unknown_functionals}.
In the following, we will denote by $\fredmope{X}$ all Fredholm operators defined on
the Banach space $X$ that have an index less or equal to zero.
\begin{lemma}
  \label{lem:gram_invertible}
  Let $T \in \linope{X}$ and $\lambda \in \sigma_p(T)$ such that 
  $T -\lambda I \in \fredmope{X}$. Then $T - \lambda I$ has finite ascent $p$, 
  \ie, $p = \asc{T - \lambda I} = p < \infty$,
  if and only if the Gramian matrix
  \begin{equation*}
    G := (\Phi^*\Phi) =
    \begin{pmatrix}
      e^*_1(e_1) & \cdots & e^*_1(e_n)\\
      \vdots & & \vdots\\
      e^*_m(e_1) & \cdots & e^*_m(e_n)
    \end{pmatrix}  \in \Cc^{m \times n}
  \end{equation*}
  has full column rank.
\end{lemma}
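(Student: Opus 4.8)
The plan is to translate the purely algebraic condition ``$G$ has full column rank'' into the geometric statement $\ker(T-\lambda I)^p \cap \im(T-\lambda I)^p = \setof{0}$, and then to read off the ascent via the characterization \eqref{eq:finite_asc-desc_properties}. Throughout I would abbreviate $S := T - \lambda I$ and use that the synthesis map $\Phi\colon \Cc^n \to X$, $\Phi(c) = \sum_{i=1}^n c_i e_i$, is injective (since $\setof{e_1,\ldots,e_n}$ is a basis of $M^p_\lambda = \ker(S^p)$) with image $\ker(S^p)$. Then $G = \Phi^*\Phi$ fails to have full column rank exactly when some $0 \neq c \in \Cc^n$ satisfies $Gc = 0$, which by \eqref{eq:projection_op}-style expansion means $e^*_j(\Phi c) = 0$ for every $j = 1,\ldots,m$.

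First I would identify the vectors annihilated by all the functionals. Because $\setof{e^*_1,\ldots,e^*_m}$ is a basis of $\Lambda^p_\lambda$, the condition $e^*_j(x) = 0$ for all $j$ is equivalent to $x \in (\Lambda^p_\lambda)_\bot$. The structural input here is \eqref{eq:prel_relation_annihilator_nullspace} applied to the operator $S^p$: using $(T^*-\lambda I)^p = (S^p)^*$, it gives $\Lambda^p_\lambda = \ker(T^*-\lambda I)^p = \ker((S^p)^*) = \im(S^p)^\bot$. Since $S$ is Fredholm, so is $S^p$, and hence $\im(S^p)$ is closed; the bipolar identity $(V^\bot)_\bot = V$ for closed subspaces $V$ (a consequence of Hahn--Banach) then yields $(\Lambda^p_\lambda)_\bot = (\im(S^p)^\bot)_\bot = \im(S^p)$. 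Consequently, for $x = \Phi(c) \in \ker(S^p)$ one has $Gc = 0$ if and only if $x \in \im(S^p)$, so by injectivity of $\Phi$ the matrix $G$ has full column rank if and only if $\ker(S^p) \cap \im(S^p) = \setof{0}$.

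The final step is to connect this intersection to the ascent. Applying the characterization \eqref{eq:finite_asc-desc_properties} with the free exponent also taken to be $p$ gives $\ker(S^p) \cap \im(S^p) = \setof{0}$ if and only if $\asc{S} \leq p < \infty$; the reverse implication can also be checked directly, since any $x \in \ker(S^{p+1})$ has $S^p x \in \ker(S) \cap \im(S^p) \subseteq \ker(S^p)\cap\im(S^p)$. Combined with the standing assumption that the chain $\ker(S) \subsetneq \cdots \subsetneq \ker(S^p)$ stabilizes precisely at index $p$, this upgrades ``$\asc{S} \leq p$'' to $\asc{S} = p$. Chaining the two equivalences then closes the argument in both directions.

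I expect the main obstacle to be the careful handling of the annihilator chain in the second paragraph: one must verify that $\Lambda^p_\lambda$ is genuinely the \emph{full} annihilator $\im(S^p)^\bot$ (rather than a proper subspace of it), and the Fredholm hypothesis must be invoked at exactly the right place to guarantee that $\im(S^p)$ is closed, because the identity $(V^\bot)_\bot = V$ breaks down for non-closed $V$. The index hypothesis $\ind{S} \leq 0$ plays the supporting role of forcing $n \leq m$, so that full column rank $n$ is attainable at all; it is otherwise not needed for the equivalence itself.
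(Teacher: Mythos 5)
Your proposal is correct and takes essentially the same route as the paper's proof: both translate full column rank of $G$ into the trivial-intersection condition $\ker(T-\lambda I)^p \cap \im(T-\lambda I)^p = \setof{0}$ via the identity $(\Lambda^p_\lambda)_\bot = \im(T-\lambda I)^p$ (using Fredholmness of $(T-\lambda I)^p$ to get closed range) and then invoke \eqref{eq:finite_asc-desc_properties}. If anything, you are slightly more careful than the paper at the last step, noting that \eqref{eq:finite_asc-desc_properties} only delivers $\asc{T - \lambda I} \leq p$ and that the strict inclusion chain from the setup is what upgrades this to $\asc{T - \lambda I} = p$.
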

\begin{proof}%[Proof of \autoref{lem:gram_invertible}]
  Suppose that $T -\lambda I$ is Fredholm operator with non-positive index. 
  Then $(T- \lambda I)^p$ is also Fredholm with non-positive index 
  $p\cdot \ind{T - \lambda I}$. This follows by the index theorem \cite[Thm.~23.1]{heuser1982},
  as 
  \begin{equation*}
    \ind{\underbrace{(T-\lambda I)\cdots(T-\lambda I)}_{p-\text{times}}} = \sum_{i=1}^p\ind{T-\lambda I} = p \cdot \ind{T - \lambda I}.
  \end{equation*}
  Therefore, $\im(T-\lambda I)^p$ is closed \cite[Prop.~24.3]{heuser1982} and 
  \begin{equation*}
    n = \alpha((T - \lambda I)^p) = \dim(\ker(T-\lambda I)^p) \leq \dim(\ker(T^*-\lambda I)^p) 
    = \beta((T - \lambda I)^p) = m.
  \end{equation*}
  Note that $(\Lambda^p_\lambda)_\bot = (\im((T- \lambda I)^p)^\bot)_\bot 
  = \overline{\im(T- \lambda I)^p} = \im(T - \lambda I)^p$.

  Let us now assume that $T - \lambda I$ has ascent $p$. In order to show that
  the columns of $G = \Phi^*\Phi$ are linearly independent, we choose
  $c  = (c_1,\ldots,c_n)^T \in \Cc^n$ such that
  \begin{equation*}
    \sum_{i=1}^n c_i e^*_j(e_i) = 0
  \end{equation*}
  for all $j \in \setof{1,\ldots, m}$. 
  Then we derive that $e^*_j(\sum_{i=0}^n c_i e_i) = 0$ for all $j \in \setof{1,\ldots, m}$.
  Therefore,
  \begin{equation*}
    \sum_{i=0}^nc_i e_i \in \bigcap_{j=1}^m\ker(e^*_j) =  (\Lambda^p_\lambda)_\bot = \im(T- \lambda I)^p.
  \end{equation*}

  As $T - \lambda I$ has finite ascent $p$ we can conclude with \eqref{eq:finite_asc-desc_properties}
  that $\ker(T- \lambda I)^p \cap \im(T- \lambda I)^p = \setof{0}$ holds. As by definition
  also $\sum_{i=1}^n c_i e_i \in \ker(T- \lambda I)^p$ holds we derive that $\sum_{i=1}^n c_i e_i = 0$.
  From the linear independence of $\setof{e_1,\ldots,e_n}$ it follows that $c_1 = \cdots = c_n = 0$.
  Therefore, the matrix $\Phi^*\Phi$ has full rank, as the columns are linearly independent.

  To show that the converse is also true let us suppose that the matrix
  $G$ has full column rank.  Hence, if $\sum_{i=1}^nc_i e^*_j(e_i) =
  0$ holds it follows that every coefficient $c_i = 0$ for all $i \in \setof{1,\ldots, n}$.
  Suppose now that $x \in \ker(T - \lambda I)^p \cap \im(T- \lambda I)^p$. 
  Then $x$ can be written as linear combination $x = \sum_{i=1}^nc_i e_i$ for some
  coefficients $c_i \in \Cc$.
  As $\im(T-\lambda I)^p = (\Lambda^p_\lambda)_\bot$, we obtain for all $j \in \setof{1,\ldots, m}$
  that
  \begin{equation*}
    0 = e^*_j\left(\sum_{i=1}^nc_ie_i\right) = \sum_{i=1}^n c_i e^*_j(e_i).
  \end{equation*}
  We conclude that $c_i = 0$ for all $i \in \setof{1,\ldots, n}$ as
  the matrix $G$ has full column rank. Finally, we have $x = 0$. 
  Therefore, $\ker(T- \lambda I)^p \cap \im(T- \lambda I)^p = \setof{0}$.  
  By \autoref{eq:finite_asc-desc_properties} 
  this is equivalent to the statement that the ascent of $T - \lambda I$ 
  is $p$ and the proof is complete.
\end{proof}
As the Gramian matrix has full column rank, we can construct a projection
operator onto $\ker(T - \lambda I)$ according to
\autoref{thm:projection_arbitrary_functionals}. Consequently, as in the last
section, we consider the finite-rank operator $P \in \compope{X}$ defined for $x
\in X$ by
\begin{equation}
  \label{eq:projection_op2}
  Px  = (\Phi A\Phi^*)(x) = \sum_{i=1}^n\sum_{j=1}^m a_{ij} e^*_j(x) e_i,
\end{equation}
where $e_i \in M_\lambda$, $e^*_j \in \Lambda_\lambda$ are the normalized
bases and $A = (a_{ij}) \in \Cc^{n \times m}$.  
This time, the functionals $e^*_j$ are explicitly chosen as basis of $\ker(T^* - \lambda I)^p$
where the coefficients $a_{ij}$ serve as parameter.
In this setting \autoref{thm:projection_arbitrary_functionals}
yields a projection operator that projects onto the generalized eigenspace $M^p_\lambda$ 
and provides a space decomposition of $X$ into $X = M^p_\lambda \oplus \ker(P)$.
\begin{corollary}
  \label{lem:decomposition}
  Let $T \in \linope{X}$ and $\lambda \in \sigma_p(T)$ such that $T -\lambda I \in \fredmope{X}$
  with ascent $p \in \Nn$.
  Then the linear operator $P \in \compope{X}$ defined for $x \in X$ as
  \begin{equation*}
    Px = \Phi A \Phi^*(x),
  \end{equation*}
  where $A$ is the Moore-Penrose inverse of $(\Phi^*\Phi)$,
  yields a continuous projection onto $M^p_\lambda \subset X$, where 
  $\im(P) = M^p_\lambda = \ker(T-\lambda I)^p$ is a closed subspace.
%  Besides,  we can decompose $X$ into
%  \begin{equation*}
%    X = \im(P) \oplus \ker(P) = \ker(T - \lambda I)^p \oplus \ker(P).
%  \end{equation*}
\end{corollary}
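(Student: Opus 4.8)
The plan is to assemble the three results established immediately before this corollary, since the statement is essentially their combination applied to the generalized eigenspaces $M^p_\lambda = \ker(T-\lambda I)^p$ and $\Lambda^p_\lambda = \ker(T^*-\lambda I)^p$. First I would check that the dimensional hypotheses are in force. Because $\lambda \in \specp{T}$, the space $\ker(T-\lambda I)$ is nontrivial and contained in $\ker(T-\lambda I)^p$, so $n = \dim M^p_\lambda \ge 1$ and the ascent satisfies $p \ge 1$. The assumption $T - \lambda I \in \fredmope{X}$, together with the index computation at the beginning of the proof of \autoref{lem:gram_invertible}, shows that $(T-\lambda I)^p$ is again Fredholm with non-positive index; consequently both eigenspaces are finite-dimensional and $0 < n = \dim M^p_\lambda \le \dim \Lambda^p_\lambda = m < \infty$, which is exactly the hypothesis $0 < \dim(M) \le \dim(\Lambda) < \infty$ required by \autoref{thm:projection_arbitrary_functionals}, with normalized bases fixed as in \eqref{eq:fixed-points-normalized-bases}.

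Next I would invoke \autoref{lem:gram_invertible}: since $T - \lambda I \in \fredmope{X}$ has finite ascent $p$, the Gramian $G = \Phi^*\Phi \in \Cc^{m\times n}$ has full column rank $n$. Hence its Moore--Penrose left inverse $A = (G^TG)^{-1}G^T$ exists and satisfies $AG = I_n$. With this $A$ in hand, \autoref{thm:projection_arbitrary_functionals}, applied to $M = M^p_\lambda$ and $\Lambda = \Lambda^p_\lambda$, yields immediately that the operator $P = \Phi A\Phi^*$ defined in \eqref{eq:projection_op2} satisfies $P^2 = P$ and $\im(P) = M^p_\lambda$. By \eqref{eq:it-inv-fixpoints} this range equals $\ker(T-\lambda I)^p$.

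Finally I would read off the remaining claims. \autoref{lem:P_compact} gives that $P$ is finite-rank and bounded by $\norm{Px} \le \norm{x}\sum_{i=1}^n\sum_{j=1}^m \abs{a_{ij}}$, so $P$ is continuous; a finite-rank bounded operator is compact, whence $P \in \compope{X}$. The range $\im(P) = M^p_\lambda$ is finite-dimensional and therefore a closed subspace of $X$, which in turn furnishes the direct sum decomposition $X = M^p_\lambda \oplus \ker(P)$. Rather than a genuine difficulty, the only point requiring care is to confirm that the chain of hypotheses lines up correctly: that Fredholmness with non-positive index plus finite ascent really delivers full column rank of $G$ through \autoref{lem:gram_invertible}, and that finite rank simultaneously yields continuity and membership in $\compope{X}$. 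The construction of $P$ itself is then purely a matter of quoting \autoref{thm:projection_arbitrary_functionals}.
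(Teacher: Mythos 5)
Your proposal is correct and follows essentially the same route as the paper, whose proof is simply a one-line citation of the preceding results; you have merely filled in the details of how \autoref{lem:gram_invertible} supplies the full column rank of $G$, \autoref{thm:projection_arbitrary_functionals} then produces the projection, and \autoref{lem:P_compact} gives boundedness and finite rank (hence compactness and closedness of the range). If anything, your citation of \autoref{thm:projection_arbitrary_functionals} is the more accurate dependency than the paper's reference to \autoref{lem:projection_bounded}, which treats only the dual-basis case.
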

\begin{proof}
  This is a direct consequence of \autoref{lem:projection_bounded} and
  \autoref{lem:gram_invertible}.
\end{proof}
% The next theorem summarizes the results we have gathered so far.
% \begin{theorem}
%   \label{thm:projection_invariants}
%   Let $T \in \linope{X}$, $\lambda \in \sigma_p(T)$ such that $T - \lambda I \in \fredmope{X}$.
%   Then the following statements are equivalent:
%   \begin{enumerate}
%   \item the space $X$ can be decomposed into $X = \ker(T- \lambda I)^p \oplus \ker(P)$,
%   \item the operator $T - \lambda I$ has ascent $p$,
%   \item the $m \times n$ matrix $G := (\Phi^*\Phi)$,
%     \begin{equation*}
%       G =
%       \begin{pmatrix}
%         e^*_1(e_1) & \cdots & e^*_1(e_n)\\
%         \vdots & & \vdots\\
%         e^*_m(e_1) & \cdots & e^*_m(e_n)
%       \end{pmatrix} \in \Cc^{m \times n}
%     \end{equation*}
%     has full column rank,
%   \item the operator $P:X \to M_\lambda$ defined for $A = (a_{ij}) \in
%     K^{n\times m}$ and $x \in X$ by
%     \begin{equation*}
%       Px  = \Phi A\Phi^*(x) = \sum_{i=1}^{n}\sum_{j=1}^m a_{ij} e^*_j(x) e_i,
%     \end{equation*}
%     yields a projection onto $\ker(T- \lambda I)^p$, where 
%     $A = (G^TG)^{-1}G^T$ is determined by the left inverse of $G$.
%   \end{enumerate}
% \end{theorem}
Note that in the current setting, we obtain a projection $P$ where
$\im(P) = \ker(T - \lambda I)^p$ is a $T$-invariant subspace. Accordingly, 
we have the space decomposition
 \begin{equation*}   
    X = \im(P) \oplus \ker(P) = \ker(T - \lambda I)^p \oplus \ker(P).
 \end{equation*}
In the following we are interested when also $\ker(P)$ is invariant with respect to the operator $T$. 
Then we can decompose the operator $T$ into
\begin{equation*} T = 
  \begin{pmatrix}
    J & 0\\
    0 & S\\
  \end{pmatrix} \in \linope{\ker(T-\lambda I)^p\oplus \ker(P)},
\end{equation*}
where $J$ is the Jordan normal form of $T$ on the generalized eigenspace 
$\ker(T-\lambda I)^p$ and $S \in \linope{\ker(P)}$  is equal to the operator $T$ 
restricted to $\ker(P)$. 
\begin{remark}
  Even though we write the operator decomposition in matrix notation, we don't
  assume the Banach space $X$ to be separable. The matrix form is only used to
  simplify notation as $\ker(T - \lambda I)$ is always finite-dimensional. In
  this case, $J$ is given according to some basis, whereas $S$ is not
  necessarily defined by a countable dense set in $X$.
\end{remark}
Furthermore, we are not only interested whether $\ker(P)$ is invariant 
with respect to $T$, we also want to know under which conditions on $T$ the
relation $\ker(P) = \im(T-\lambda I)^p$ holds. It turns out 
that this is exactly the case when the $T - \lambda I$ is a Browder operator,
\ie, the operator $T - \lambda I$ has Fredholm index $0$ and finite
chain length $p$. We will discuss this particular case in the following.
First, we show in the next lemma that the Fredholm index $0$ of
$T - \lambda I$ leads to the invertibility of the Gramian matrix
$\Phi^*\Phi$. Finally, we will prove that in this case
$\ker(P) = \im(T-\lambda I)^p$ holds. We will conclude this section
with an overview over related results.
\begin{lemma}
  Let $T \in \linope{X}$ and $\lambda \in \Cc$ such that $T - \lambda I \in \fredmope{X}$.
  Then $T-\lambda I$ is a Browder operator if and only if 
  the matrix $G = \Phi^*\Phi$ is invertible.
\end{lemma}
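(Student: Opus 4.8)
The plan is to read the Browder property off two pieces of data encoded in $G$: its shape and its rank. Since $G \in \Cc^{m\times n}$ can only be invertible (in the two-sided sense) when it is square, invertibility forces \emph{both} $m = n$ and full rank simultaneously, and I would match these two conditions to, respectively, the vanishing of the Fredholm index and the finiteness of the chain length, invoking \autoref{lem:gram_invertible} for the rank condition.

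First I would dispatch the forward direction. If $T - \lambda I$ is Browder, then it is Fredholm with finite chain length $\asc{T-\lambda I} = \dsc{T-\lambda I} = p < \infty$ and $\alpha(T-\lambda I) = \beta(T-\lambda I)$, so $\ind{T-\lambda I} = 0$. The index formula for powers (as in the proof of \autoref{lem:gram_invertible}) gives $\ind{(T-\lambda I)^p} = p\cdot\ind{T-\lambda I} = 0$, hence $n = \alpha((T-\lambda I)^p) = \beta((T-\lambda I)^p) = m$ and $G$ is square. Finite ascent $p$ together with \autoref{lem:gram_invertible} makes $G$ of full column rank $n$, and a square matrix of full rank is invertible.

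For the converse, suppose $G$ is invertible. Then $G$ is square, so $m = n$, which forces $\alpha((T-\lambda I)^p) = \beta((T-\lambda I)^p)$ and hence $\ind{(T-\lambda I)^p} = 0$; the same index formula gives $\ind{T-\lambda I} = 0$, i.e. $T-\lambda I$ is Weyl. Invertibility also yields full column rank, so by \autoref{lem:gram_invertible} the ascent $p = \asc{T-\lambda I}$ is finite. It remains to promote finite ascent to finite chain length. Here I would use that $(T-\lambda I)^p$ is Fredholm of index $0$, so $\dim\ker(T-\lambda I)^p = \codim\im(T-\lambda I)^p$, while \eqref{eq:finite_asc-desc_properties} gives $\ker(T-\lambda I)^p \cap \im(T-\lambda I)^p = \setof{0}$. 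Then the natural injection $\ker(T-\lambda I)^p \hookrightarrow X/\im(T-\lambda I)^p$ is a map between finite-dimensional spaces of equal dimension, hence an isomorphism, which yields $X = \ker(T-\lambda I)^p \oplus \im(T-\lambda I)^p$. This is exactly the finite-chain-length decomposition and forces $\dsc{T-\lambda I} = p$, so $T-\lambda I$ is Browder.

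I expect the final step of the converse --- upgrading finite ascent to finite descent --- to be the main obstacle, since it is the only place where the index-zero hypothesis is genuinely needed. The delicate point is the dimension count that turns the trivial-intersection property from \eqref{eq:finite_asc-desc_properties} into a full direct-sum decomposition of $X$; once that decomposition is in hand, finiteness of the descent and its equality with the ascent follow from the standard chain-length characterization recalled in \autoref{ssec:compact_fredholm_riesz}, and everything else is bookkeeping with the index formula and \autoref{lem:gram_invertible}.
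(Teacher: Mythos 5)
Your argument is correct, and its skeleton coincides with the paper's own proof: both directions rest on reading $\ind{T-\lambda I}=0$ off the \emph{shape} of $G$ (squareness, via $n=\alpha((T-\lambda I)^p)=\beta((T-\lambda I)^p)=m$ and the index theorem for powers) and finite ascent off its \emph{rank}, via \autoref{lem:gram_invertible}. The one genuine divergence is exactly the step you flag as the crux: upgrading finite ascent plus index zero to finite descent. The paper settles this by citing \cite[Prop.~38.5 and 38.6]{heuser1982}, whereas you prove it inline: the canonical map $\ker(T-\lambda I)^p \to X/\im(T-\lambda I)^p$ is injective by \eqref{eq:finite_asc-desc_properties}, and since $\ind{(T-\lambda I)^p}=0$ makes the two finite-dimensional spaces equidimensional, it is an isomorphism, yielding $X=\ker(T-\lambda I)^p\oplus\im(T-\lambda I)^p$. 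This buys you a self-contained argument that also re-derives the decomposition of \autoref{thm:space_decomposition_ker_im} as a by-product, essentially anticipating the equivalence of items (1)--(3) in \autoref{thm:projection_invariants2}; the paper's route is shorter but outsources the key operator-theoretic fact. One small point of precision: \autoref{ssec:compact_fredholm_riesz} records only the implication from finite chain length \emph{to} the decomposition, not the converse you need, so you should add the one-line closing computation $\im(T-\lambda I)^p=(T-\lambda I)^p\left(\ker(T-\lambda I)^p\oplus\im(T-\lambda I)^p\right)=\im(T-\lambda I)^{2p}$, which by nestedness of the ranges forces $\im(T-\lambda I)^p=\im(T-\lambda I)^{p+1}$, hence $\dsc{T-\lambda I}\leq p$; equality $\dsc{T-\lambda I}=\asc{T-\lambda I}=p$ then follows from the recalled fact that finite ascent and descent coincide. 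With that line supplied, your proof is complete and slightly more informative than the paper's.
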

\begin{proof}
  If $G$ is invertible, then $T - \lambda I$ has finite ascent $p$
  by \autoref{lem:gram_invertible} and $G$ is necessarily a square matrix, thus
  $\ind{T -\lambda I}^p = 0$ as 
  \begin{equation}
    \label{eq:it_proof_indzero_dimensions}
    n = \alpha((T - \lambda I)^p) = \beta((T - \lambda I)^p) = m,
  \end{equation}
  using the definition of the nullity $\alpha((T - \lambda I)^p) = \dim\ker(T - \lambda I)^p$
  and the deficiency $\beta((T - \lambda I)^p) = \dim\ker(T^* - \lambda I)^p$.
  % As $T - \lambda I \in \fredmope{X}$ and heand $0 = \ind{T - \lambda I}^p = p \cdot \ind{T - \lambda I}$,
  % we have that $\ind{T - \lambda I} = 0$. 
  As  $\alpha(T - \lambda I) = \beta(T - \lambda I)$ and $\asc{T - \lambda I} = p < \infty$,
  we can conclude by \cite[Prop. 38.5 and 38.6]{heuser1982} that also the descent of $T - \lambda I$ is finite. Therefore, $T - \lambda I \in \browdope{X}$,
  \ie, $T - \lambda I$ is a Browder operator with ascent $p$.

  Assume to the contrary that $T - \lambda I$ is a Browder operator.
  Then $T - \lambda I$ has finite ascent $p$ and $\ind{T - \lambda I} = 0$
  by definition. As $\ind{T - \lambda I} = 0$ the matrix $G = \Phi^*\Phi$ is
  a $n \times n$-matrix as $n = \alpha((T - \lambda I)^p) = \beta((T - \lambda I)^p)$
  using the same argument as in \eqref{eq:it_proof_indzero_dimensions}.
  As we have the conditions $\ind{T - \lambda I} = 0$ and $\asc{T - \lambda I} = p$ 
  we can apply \autoref{lem:gram_invertible}   
  to conclude that the matrix $G$ has full rank and, thus, is invertible as a square matrix.
  % From $\ind{T - \lambda I} = 0$, we obtain that $\alpha(T) = \beta(T) < \infty$.
  % I.e, the matrix $G$ is a square matrix.
  % The condition $\ker(T - \lambda I)^p \cap \im(T-\lambda I)^p = \setof{0}$ is equivalent
  % to the finite ascent $p = \asc{T - \lambda I} < \infty$.
  % \autoref{lem:gram_invertible} gives now the result.
\end{proof}
% This leads to the following space decomposition
% \begin{equation*}
%   X = \im(P) \oplus \ker(P) = \ker(T - \lambda I)^p \oplus \im(T - \lambda I)^p.
% \end{equation*}
%The next result is already known for compact operators due to Riesz and Schauder.%, see also
%\autoref{prop:space_decomposition_ascent}. 
Next, we will prove that the null space of the projection $P$ is given
by $\ker(P) = \im(T - \lambda I)^p$ provided that $T - \lambda I$ is a
Fredholm operators with index $0$ having finite chain length $p$, \ie,
$T - \lambda I$ is a Browder operator. Note that the invertibility of $\Phi^*\Phi$ 
is already sufficient for this result.
\begin{theorem}[Space decomposition]
  \label{thm:space_decomposition_ker_im}
  Let $T \in \linope{X}$ and $\lambda \in \sigma_p(T)$ 
  such that $T-\lambda I$ is a Browder operator with ascent $p$. Then
  \begin{equation*}
    X = \ker(T-\lambda I)^p \oplus \im(T- \lambda I)^p,
  \end{equation*}
  where $\im(\Phi (\Phi^*\Phi)^{-1} \Phi^*) = \ker(T - \lambda I)^p$ and $\ker(\Phi (\Phi^*\Phi)^{-1} \Phi^*) = \im(T - \lambda I)^p$.
\end{theorem}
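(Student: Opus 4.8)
The plan is to identify the abstract direct-sum decomposition guaranteed by the Browder property of $T-\lambda I$ with the one induced by the explicitly constructed projection $P=\Phi(\Phi^*\Phi)^{-1}\Phi^*$. First I would record the consequences of the hypothesis. Since $T-\lambda I\in\browdope{X}$, its Fredholm index is $0$, so $n=\dim M^p_\lambda=\dim\Lambda^p_\lambda=m$, and by the preceding lemma $G=\Phi^*\Phi$ is a square invertible matrix. In particular the Moore--Penrose left inverse $(G^TG)^{-1}G^T$ collapses to the ordinary inverse $G^{-1}=(\Phi^*\Phi)^{-1}$, so the operator in the statement coincides with the projection $P=\Phi A\Phi^*$ of \autoref{lem:decomposition}. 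That corollary already delivers half of the claim: $P$ is a continuous projection with $\im(P)=\ker(T-\lambda I)^p$, and by \autoref{lem:projection_bounded} it induces the splitting $X=\im(P)\oplus\ker(P)=\ker(T-\lambda I)^p\oplus\ker(P)$. It therefore remains only to show $\ker(P)=\im(T-\lambda I)^p$.

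The inclusion $\im(T-\lambda I)^p\subseteq\ker(P)$ is the straightforward direction. For $y=(T-\lambda I)^px$ the coordinates of $\Phi^*y$ are the values $e^*_j(y)$, and since each basis functional $e^*_j$ lies in $\Lambda^p_\lambda$, which by \eqref{eq:it-inv-fixpointsdual2} annihilates the whole range $\im(T-\lambda I)^p$, every such value vanishes. Hence $\Phi^*y=0$ and $Py=\Phi(\Phi^*\Phi)^{-1}\Phi^*y=0$, giving $\im(T-\lambda I)^p\subseteq\ker(P)$.

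For the reverse inclusion I would invoke the Browder structure. Because $T-\lambda I$ has finite chain length $p$, it provides the independent decomposition $X=\ker(T-\lambda I)^p\oplus\im(T-\lambda I)^p$. I now have two splittings sharing the first summand, namely $X=\ker(T-\lambda I)^p\oplus\ker(P)$ and $X=\ker(T-\lambda I)^p\oplus\im(T-\lambda I)^p$, together with the nesting $\im(T-\lambda I)^p\subseteq\ker(P)$ from the previous step. A complement-matching argument then forces equality: given $y\in\ker(P)$, decompose $y=u+v$ with $u\in\ker(T-\lambda I)^p$ and $v\in\im(T-\lambda I)^p$; since $v\in\im(T-\lambda I)^p\subseteq\ker(P)$, the element $u=y-v$ lies in $\ker(P)$ as well, so $u\in\ker(T-\lambda I)^p\cap\ker(P)=\im(P)\cap\ker(P)=\setof{0}$, whence $y=v\in\im(T-\lambda I)^p$. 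This yields $\ker(P)=\im(T-\lambda I)^p$ and, combined with $\im(P)=\ker(T-\lambda I)^p$, the asserted decomposition together with the two range identities. The one point requiring care --- the genuine obstacle --- is precisely this reverse inclusion: the annihilator computation alone gives only one containment, and equality cannot be read off from the two splittings in isolation without exploiting that $\ker(T-\lambda I)^p$ is a common complement and that the two candidate complements are nested.
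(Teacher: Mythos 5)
Your proof is correct, but it reaches the key inclusion $\ker(P)\subseteq\im(T-\lambda I)^p$ by a genuinely different route than the paper. The paper argues directly and self-containedly: from $Px=0$ the linear independence of $e_1,\ldots,e_n$ forces $\sum_j a_{ij}e^*_j(x)=0$ for every $i$, the invertibility of $A=(\Phi^*\Phi)^{-1}$ then gives $e^*_j(x)=0$ for all $j$, and the double-annihilator identity $x\in(\Lambda^p_\lambda)_\bot=\left(\im((T-\lambda I)^p)^\bot\right)_\bot=\im(T-\lambda I)^p$ closes the argument, using that the range of a Fredholm power is closed. You instead import the classical finite-chain-length splitting $X=\ker(T-\lambda I)^p\oplus\im(T-\lambda I)^p$ (stated as background in the preliminaries) and run a nested-complement argument: two complements of the same subspace, one contained in the other, must coincide. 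Your complement-matching step is valid and more elementary --- it avoids the bipolar/closed-range machinery entirely --- but it consumes the space decomposition as an input, whereas the paper produces it as output; this matters here, because the paper's explicit aim is to \emph{reprove} the classical decomposition via the constructed projection (``In contrast to existing literature we prove it using an explicitly constructed finite-rank projection $P$''), and its proof accordingly never uses finite descent, only invertibility of $\Phi^*\Phi$ (as the remark before the theorem notes, that invertibility ``is already sufficient''), so the splitting and hence $\dsc{T-\lambda I}\leq p$ come for free, while your route needs finite descent upfront. One small inaccuracy in your closing comment: it is not true that ``the annihilator computation alone gives only one containment'' --- since $\im(T-\lambda I)^p$ is closed, the pre-annihilator computation $(\Lambda^p_\lambda)_\bot=\im(T-\lambda I)^p$ is an equality, and this is exactly how the paper obtains the hard direction without ever invoking the classical decomposition.
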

\begin{proof}
  Let $n = \dim(\ker(T-\lambda I)^p) = \dim(\ker(T^* - \lambda I)^p) < \infty$.
  As $(T-\lambda I)^p$ is a Fredholm operator, $\im(T - \lambda I)^p$ is closed.
  We already have shown that $\im(P) = \ker(T-\lambda I)^p$.
  In order to show $\ker(P) = \im(T- \lambda I)^p$ let $x \in \ker(P)$. Then we have
  \begin{equation}
    \label{eq:proof_it_basis-linindependent}
   0 =  Px = \sum_{i=1}^{n}\sum_{j=1}^n a_{ij} e^*_j(x) e_i.
  \end{equation}
  As $\setof{e_1,\ldots,e_n}$ form a basis for $\ker(T- \lambda I)^p$ by \eqref{eq:it-inv-fixpoints} and 
  \eqref{eq:fixed-points-normalized-bases}, relation \eqref{eq:proof_it_basis-linindependent}
  can only hold if
  \begin{equation*}
    \sum_{j=1}^n a_{ij} e^*_j(x) = 0
  \end{equation*}
  for every $i \in \setof{1,\ldots, n}$. Using that $A = (\Phi^*\Phi)^{-1}$  
  is invertible by \autoref{lem:gram_invertible}, we obtain that $e^*_j(x) = 0$ 
  for all $j \in \setof{1,\ldots, m}$. Then it is easy to see that 
  \begin{equation*}
    x \in (\Lambda^p_\lambda)_\bot = (\im((T- \lambda I)^p)^\bot)_\bot = \im(T- \lambda I)^p, 
  \end{equation*}
  because $\im(T-\lambda I)^p$ is closed.

  Now let $y \in \im(T-\lambda I)^p$. Accordingly, there is $x \in X$ with $(T - \lambda I)^px = y$. 
  In this case also $y \in \ker(P)$ holds, because
  \begin{equation*}
    Py = \sum_{i=1}^{n}\sum_{j=1}^m a_{ij} e^*_j((T - \lambda I)^px) e_i = 0.
  \end{equation*}
  In the last step we used that $e^*_j \in \im((T - \lambda I)^p)^\bot$.
  %and the definition \eqref{eq:fixpointsdual2}. 
  Finally, we obtain the space decomposition
  \begin{equation*}
    X = \im(P) \oplus \ker(P) = \ker(T- \lambda I)^p \oplus \im(T-\lambda I)^p,
  \end{equation*}
  where $\im(P) = \ker(T-\lambda I)^p$ and $\ker(P) = \im(T-\lambda I)^p$.
\end{proof}

We conclude this section with a theorem that gathers all the results we have 
shown for a bounded operator $T$ with eigenvalue $\lambda \in \sigma_p(T)$,
where $T - \lambda I$ is a Weyl operator, \ie, a Fredholm operator with zero index.
Note once more that this restriction is important for our setting where the 
generalized eigenspaces have to be finite-dimensional.
\begin{theorem}[Characterization of the Browder operator $T - \lambda I$]
  \label{thm:projection_invariants2}\hfill\\
  Let $T \in \linope{X}$ and $\lambda \in \sigma_p(T)$ 
  such that $T - \lambda I \in \weylope{X}$.
  Then the following statements are equivalent:
  \begin{enumerate}
  \item $T - \lambda I$ is a Browder operator, $T - \lambda I \in \browdope{X}$,
  \item the operator $T - \lambda I$ has finite chain length, \ie, 
    $\asc{T-\lambda I} = \dsc{T - \lambda I} < \infty$,
  \item the space $X$ can be decomposed into $X = \ker(T- \lambda I)^p \oplus \im(T - \lambda I)^p$,
  \item the $n \times n$ matrix $G := (\Phi^*\Phi)$,
    \begin{equation*}
      G =
      \begin{pmatrix}
        e^*_1(e_1) & \cdots & e^*_1(e_n)\\
        \vdots & & \vdots\\
        e^*_n(e_1) & \cdots & e^*_n(e_n)
      \end{pmatrix} \in \Cc^{n \times n}
    \end{equation*}
    is invertible, where $n = \dim\ker(T - \lambda I)^p = \dim\ker(T^* - \lambda I)^p$,
  \item the operator $P:X \to \ker(T-\lambda I)^p$ defined by
    \begin{equation*}
      Px  = \Phi A\Phi^*(x) = \sum_{i=1}^{n}\sum_{j=1}^n a_{ij} e^*_j(x) e_i,\qquad x\in X,
    \end{equation*}
    yields a projection onto $\ker(T- \lambda I)^p$, where $A = (a_{ij}) := G^{-1}$.
  \end{enumerate}
\end{theorem}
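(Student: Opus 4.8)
The plan is to recognize that this theorem is a synthesis of the results already established in this section, so the proof amounts to assembling the right implications rather than producing new arguments. The standing hypothesis $T - \lambda I \in \weylope{X}$ does two things that I would flag at the very start. First, a Weyl operator is in particular Fredholm with non-positive index, so $T - \lambda I \in \fredmope{X}$ and every lemma of this section applies verbatim. Second, the vanishing index forces $\beta((T-\lambda I)^p) = \alpha((T-\lambda I)^p)$, hence $m = n$, so the Gramian $G = \Phi^*\Phi$ is a genuine square $n \times n$ matrix; for such a matrix \emph{full column rank} and \emph{invertibility} coincide, and the Moore-Penrose inverse reduces to the ordinary inverse. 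This last remark is what lets me pass freely between the full-rank statements of the earlier lemmas and the invertibility statements of items $(4)$ and $(5)$.

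I would first dispose of the trivial equivalence $(1) \Leftrightarrow (2)$: by definition a Browder operator is a Fredholm operator of finite chain length, and since $T - \lambda I$ is already Fredholm (being Weyl), being Browder is precisely the same as having finite chain length $\asc{T-\lambda I} = \dsc{T-\lambda I} < \infty$. Next, $(1) \Leftrightarrow (4)$ is exactly the content of the (unlabelled) lemma immediately preceding \autoref{thm:space_decomposition_ker_im}, which characterizes Browder operators in $\fredmope{X}$ by invertibility of $G$; I would simply invoke it, the square shape of $G$ being guaranteed by the index-zero hypothesis. The equivalence $(4) \Leftrightarrow (5)$ then follows from \autoref{thm:projection_arbitrary_functionals}: if $G$ is invertible it has full column rank, so $P = \Phi A\Phi^*$ with $A = G^{-1}$ is a projection onto $\ker(T-\lambda I)^p$; conversely, if $P$ is such a projection then that theorem forces $G$ to have a left inverse, \ie, full column rank, which for the square matrix $G$ means invertibility.

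It remains to weave in statement $(3)$. The forward direction $(1) \Rightarrow (3)$ is precisely \autoref{thm:space_decomposition_ker_im}, which yields $X = \ker(T-\lambda I)^p \oplus \im(T-\lambda I)^p$ for a Browder operator of ascent $p$. For the return $(3) \Rightarrow (1)$, I would read off from the direct sum that $\ker(T-\lambda I)^p \cap \im(T-\lambda I)^p = \setof{0}$; by \eqref{eq:finite_asc-desc_properties} this gives finite ascent $\asc{T-\lambda I} \le p$. Then \autoref{lem:gram_invertible} makes $G$ of full column rank, hence invertible as a square matrix, and the Browder characterization lemma closes the loop back to $(1)$. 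Chaining these establishes that all five statements are equivalent.

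As for the main obstacle, there is no deep analytic difficulty here, since the heavy lifting was carried out in \autoref{lem:gram_invertible}, \autoref{thm:projection_arbitrary_functionals} and \autoref{thm:space_decomposition_ker_im}. The one point that requires genuine care is the consistent use of the Weyl hypothesis to guarantee $m = n$: this is what collapses the distinction between \emph{full column rank} and \emph{invertibility} and between the Moore-Penrose inverse and the ordinary inverse, and without index zero the Gramian would be rectangular and statements $(4)$ and $(5)$ would not even be well posed. A secondary bookkeeping point is ensuring that the integer $p$ appearing in $(3)$, $(4)$ and $(5)$ is consistently taken to be the ascent of $T - \lambda I$, so that all statements refer to the same space decomposition.
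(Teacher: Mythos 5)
Your proposal is correct and takes essentially the same route as the paper: the paper states this theorem without a separate proof, introducing it explicitly as a summary that ``gathers all the results we have shown,'' namely \autoref{lem:gram_invertible}, the unlabelled lemma characterizing Browder operators by invertibility of $G$, \autoref{thm:space_decomposition_ker_im} and \autoref{thm:projection_arbitrary_functionals} --- precisely the assembly you perform. Your two flagged points, that the Weyl hypothesis forces $m = n$ (so full column rank collapses to invertibility and the Moore--Penrose inverse to $G^{-1}$) and that $p$ must consistently denote $\asc{T - \lambda I}$, are exactly the glue the paper leaves implicit.
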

The main contribution of this article is the invertibility of the Gram matrix and
the construction of the projection operator as stated in the last item. We have shown
an explicit construction of the projection operator $P$ by the inverse of the Gramian matrix
for the space decomposition in the third item. In the next section, we will
apply these results to uniform ergodic theorems.

\section{Application: Uniform ergodic theorems}
We conclude this article by showing a relation between the theory developed in the last sections 
and uniform ergodic theorems. \citet{Sine:1970} has shown that if $T$ is a
contraction on a Banach space $X$ then the Ces\'aro mean
\begin{equation*}
  a_n(T) := n^{-1}\sum_{k=0}^{n-1}T^k
\end{equation*}
converge strongly for $n \to \infty$ if and only if the fixed points of $T$ separate the fixed points
 of $T^*$. 

We show here that for a contraction $T$ where $T - I$ is a Weyl operator, \ie,
 a Fredholm operator of index $0$, the this fixed point separation property is
 equivalent to the property that $T - I$ has ascent one. This states in
 particular that $T- I$ is in fact a Browder operator.
\begin{theorem}
  Let $T \in \boundope{X}$ such that $\normop{T} \leq 1$ and $T - I \in \weylope{X}$.
  Then $\ker(T - I)$ separates the points of $\ker(T^* - I)$ if and only if the matrix
  \begin{equation*}
    G =
    \begin{pmatrix}
      e^*_1(e_1) & \cdots & e^*_1(e_n)\\
      \vdots & & \vdots\\
      e^*_n(e_1) & \cdots & e^*_n(e_n)
    \end{pmatrix} \in \Cc^{n \times n}
  \end{equation*}
  is invertible, where $n = \dim(T - \lambda I) = \dim(T^* - \lambda I)$.
\end{theorem}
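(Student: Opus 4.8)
The plan is to reduce the stated equivalence to an elementary fact about the square matrix $G$, using the Weyl hypothesis only to guarantee that $G$ is square, and then to indicate how the contraction hypothesis connects the result to Sine's theorem through the ascent-one property. First I would record that, since $T-I\in\weylope{X}$ is Fredholm of index zero, $\alpha(T-I)=\beta(T-I)=n<\infty$; hence $\dim\ker(T-I)=\dim\ker(T^*-I)=n$ and, with the normalized bases $\setof{e_1,\ldots,e_n}$ of $\ker(T-I)$ and $\setof{e^*_1,\ldots,e^*_n}$ of $\ker(T^*-I)$, the Gram matrix $G=(e^*_i(e_j))$ is a genuine $n\times n$ matrix. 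In particular ``$G$ invertible'' is the same as ``the rows of $G$ are linearly independent.''

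Next I would translate the separation property into a statement about the left null space of $G$. By linearity, $\ker(T-I)$ separates the points of $\ker(T^*-I)$ precisely when the only $z^*\in\ker(T^*-I)$ with $z^*(x)=0$ for every $x\in\ker(T-I)$ is $z^*=0$. Writing $z^*=\sum_{i=1}^n c_i e^*_i$ and testing against the basis of $\ker(T-I)$, the condition $z^*(e_j)=0$ for all $j$ reads $\sum_{i=1}^n c_i e^*_i(e_j)=0$, that is $c^T G=0$. Thus separation holds if and only if $c^T G=0$ forces $c=0$, i.e.\ the rows of $G$ are linearly independent, which for the square matrix $G$ is exactly its invertibility. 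This yields both directions of the equivalence at once; if one prefers an operator-theoretic phrasing, the annihilator identity \eqref{eq:prel_relation_annihilator_nullspace} lets one rewrite the vanishing of $z^*$ on $\ker(T-I)$ in terms of $\im(T-I)$.

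Finally I would tie the statement to the surrounding discussion. Since $T$ is a contraction it is power-bounded, and for $x\in\ker(T-I)^2$ one has $T^m x=x+m\,(T-I)x$; boundedness of $\setof{\norm{T^m x}}$ then forces $(T-I)x=0$, so $\ker(T-I)^2=\ker(T-I)$ and $\asc{T-I}\le 1$. Hence the generalized eigenspaces coincide with the ordinary ones (the case $p=1$), the matrix $G$ above is precisely the Gram matrix appearing in \autoref{thm:projection_invariants2}, and by the lemma preceding \autoref{thm:space_decomposition_ker_im} its invertibility is equivalent to $T-I$ being a Browder operator, i.e.\ to $\asc{T-I}=\dsc{T-I}<\infty$. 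In view of Sine's criterion this identifies the separation of fixed points with the ascent-one (Browder) property, which is the conclusion we wish to draw.

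The computations are routine; the one point requiring care is the bookkeeping obstacle of matching the direction of ``separates the points'' to the correct null space of $G$. Because it is the functionals $z^*=\sum_i c_i e^*_i$ that must be separated, the relevant object is the left null space (the row dependencies) of $G=(e^*_i(e_j))$, not the right one. This is harmless once the Weyl hypothesis has made $G$ square, since a square matrix is invertible iff its rows are independent iff its columns are, so the row-rank and column-rank formulations coincide and the choice of null space is immaterial to the final conclusion.
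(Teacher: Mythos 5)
Your proposal is correct and takes essentially the same route as the paper: both arguments reduce the separation property, via the difference functional $z^* = x_1^* - x_2^*$, to testing $\sum_i c_i e^*_i$ against the basis $\setof{e_1,\ldots,e_n}$ and reading off linear (in)dependence of the rows of the square matrix $G$, with the Weyl hypothesis used only to make $G$ square. Your single biconditional chain merely streamlines the paper's two separate contradiction arguments, and your closing paragraph on ascent one and Sine's criterion reproduces the paper's surrounding discussion rather than adding anything needed for the theorem itself.
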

\begin{proof}
  We show first that if the fixed points of $T$ separate the fixed points of $T^*$
  then the matrix $G$ is invertible. To this end, let us assume to the contrary 
  that the matrix $G$ is not invertible. We will show that in this case
  $\ker(T - I)$ does not separate $\ker(T^* - I)$. If $G$ is not invertible, then
  the rows of $G$ are not linearly independent. 
  Hence, we can assume there are $c_1,\ldots,c_n \in \Cc$ such that 
  \begin{equation*}
    \sum_{j=1}^n c_j e_j^*(e_i) = 0,\qquad \text{for all }i \in \setof{1,\ldots,n},
  \end{equation*}
  where there exists at least one coefficient with $c_k \neq 0$. Then
  \begin{equation*}
    e_k^*(x) = \sum_{j\neq k} -\left(\frac{c_j}{c_k}\right) e_j^*(x)
  \end{equation*}
  for all $x \in \ker(T - I)$ as $e_1,\ldots,e_n$ form a basis. We conclude that
  $\ker(T - I)$ does not separate $\ker(T^* - I)$.
  
  We prove next by contradiction that if $G$ is invertible then $\ker(T - I)$ separates $\ker(T^* -I)$.
  To this end, assume that the fixed points of $T$ do not separate the fixed points of $T^*$.
  Then there are $x_1^* \neq x_2^* \in \ker(T^* - I)$ such that for all $x \in \ker(T - I)$
  \begin{equation*}
    x_1^*(x) = x_2^*(x).
  \end{equation*}
  Let $x_1^* = \sum_{j=1}^nc_j e_j^*$ and $x_2^* = \sum_{j=1}^nb_j e_j^*$. Then as well
  \begin{equation*}
    x_1^*(e_i) - x_2^*(e_i) = \sum_{j=1}(c_j - b_j) e_j^*(e_i) = 0
  \end{equation*}
  holds for all $i \in \setof{1,\ldots,n}$. 
  As $c_i \neq b_i$ for at least one $i \in \setof{1,\ldots,n}$ the rows of $G$ are linearly dependent
  and $G$ is not invertible.
\end{proof}

Finally, we extend our results of \autoref{thm:projection_invariants2} with the result of the 
previous theorem.
\begin{corollary}
  Let $T \in \boundope{X}$ with $\normop{T} \leq 1$ such that $T - I \in \weylope{X}$.
  Then the following statements are equivalent:
  \begin{enumerate}
  \item $T - I$ has chain length one, \ie, $\asc{T - I} = \dsc{T - I} = 1$,
  \item $X = \ker(T - I) \oplus \im(T - I)$,
  \item $T - I \in \browdope{X}$,
  \item $G$ is invertible,
  \item $P =\Phi G^{-1} \Phi^*$ yields a projection onto $\ker(T - I)$,
  \item The Ces\'aro means $n^{-1}\sum_{k=0}^{n-1}T^k$ converge in the strong operator topology towards $P$
    for $n \to\infty$.
  \end{enumerate}
\end{corollary}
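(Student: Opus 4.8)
The plan is to reduce the whole chain of equivalences to \autoref{thm:projection_invariants2}, the preceding (unlabelled) theorem, and the ergodic theorem of \citet{Sine:1970}. The key new ingredient contributed by the hypothesis $\normop{T}\leq 1$ is the bound $\asc{T-I}\leq 1$. To see this, suppose $(T-I)^2x=0$ and put $y=(T-I)x$; then $Ty=y$, whence $T^nx=x+ny$ for every $n$, and $\normop{T}\leq 1$ forces $\norm{x+ny}\leq\norm{x}$ for all $n$, which is possible only if $y=0$. Hence $\ker(T-I)^2=\ker(T-I)$, so that whenever $T-I$ has finite ascent that ascent equals $1$ (it is at least $1$ since $1\in\specp{T}$ in the ambient setting). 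In particular the generalized eigenspaces appearing in \autoref{thm:projection_invariants2} reduce to $\ker(T-I)$ and $\ker(T^*-I)$, and the matrix $G$ and the operator $P$ of the corollary are precisely those of \autoref{thm:projection_invariants2} for $p=1$.

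Granting this reduction, items (1)--(5) are, as a collection, the items of \autoref{thm:projection_invariants2} specialized to $\lambda=1$, the only change being that finite chain length is sharpened to chain length one via the ascent bound; the decomposition $X=\ker(T-I)\oplus\im(T-I)$ follows from \autoref{thm:space_decomposition_ker_im} with $\im(P)=\ker(T-I)$ and $\ker(P)=\im(T-I)$, while the invertibility of $G$ and the projection property of $P$ are transcribed directly. It therefore remains to weave item (6) into this cycle. The preceding theorem identifies the invertibility of $G$ in item (4) with the separation property that $\ker(T-I)$ separates the points of $\ker(T^*-I)$, and the theorem of \citet{Sine:1970} identifies that same separation property with the strong convergence of the Ces\`aro means $a_n(T)=n^{-1}\sum_{k=0}^{n-1}T^k$. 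Chaining these two equivalences links (6) to (4), and hence to all the remaining items.

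The step I expect to require the most care is showing that, once the Ces\`aro means converge, their limit is the specific operator $P=\Phi G^{-1}\Phi^*$ named in item (6). By the mean ergodic theorem, strong convergence of $a_n(T)$ produces a bounded projection $Q$ with $\im(Q)=\ker(T-I)$ and $\ker(Q)=\overline{\im(T-I)}$. Since $T-I\in\weylope{X}$ is Fredholm, its range is closed, so $\overline{\im(T-I)}=\im(T-I)$. On the other hand, \autoref{thm:space_decomposition_ker_im} shows that $P$ is a continuous projection with $\im(P)=\ker(T-I)$ and $\ker(P)=\im(T-I)$. A continuous projection is uniquely determined by its range and kernel; as $Q$ and $P$ share both, we conclude $Q=P$. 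This closes the cycle of equivalences and simultaneously identifies the Ces\`aro limit in (6) with $P$.
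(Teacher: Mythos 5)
Your proposal is correct and follows essentially the same route the paper intends: items (1)--(5) by specializing \autoref{thm:projection_invariants2} to $\lambda = 1$ (with finite chain length sharpened to one), and item (6) linked through the separation criterion of the preceding theorem together with the ergodic theorems of \citet{Sine:1970} and \citet{Dunford:1943}. The details you supply beyond the paper's citation chain---the contraction argument forcing $\asc{T-I} \leq 1$, which is exactly the paper's \autoref{lem:periph_ascent_one} specialized to $\lambda = 1$, and the identification of the Ces\'aro limit with $P = \Phi G^{-1}\Phi^*$ via the uniqueness of a bounded projection with prescribed range and kernel (using closedness of $\im(T-I)$ from Fredholmness)---are sound and fill in precisely what the paper delegates to \cite[Thm.~3.16 on p.~215]{Dunford:1943}.
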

The last item follows in particular by work of \cite[Thm.~3.16 on p.~215]{Dunford:1943}.

% \begin{example}[Riesz operators]
%   \mbox{}\fxnote{correct examples}
%   If $T \in \rieszope{X}$, then the operator $(T - \lambda I)$ is a Fredholm operator of index $0$
%   for all $\lambda \neq 0$. % Therefore, 
%   % \begin{equation*}
%   %   \dim(\ker(T- \lambda I)) = \dim(\coker(T- \lambda I)) < \infty.
%   % \end{equation*}
%   % As $\dim(\coker(T- \lambda I)) < \infty$ we have $\dim((X/\im(T- \lambda I))) = \dim((X/\im(T-\lambda I)^*)$ 
%   % and hence
%   % \begin{equation*}
%   %   \dim(\coker(T- \lambda I)) = \dim(\ker(T^* - \lambda I)) = \dim(\im(T- \lambda I)^\bot).
%   % \end{equation*}
%   Therefore, $\dim(\ker(T-\lambda I)) = \dim(\ker(T^* - \lambda I))$. 
%   Because of the compactness of $T$, we also have that $\im(T - \lambda I)$ is closed.
%   If now $\dim(\ker(T-I)) > 0$ and $\ker(T -\lambda I) = \ker(T-\lambda I)^2$,
%   then \autoref{thm:projection_invariants} and \autoref{thm:space_decomposition_ker_im} yields the space decomposition
%   \begin{equation*}
%     X = \ker(T-\lambda I) \oplus \im(T- \lambda I).
%   \end{equation*}
% \end{example}

% \begin{example}[Finite Rank Operators]
%   If $T \in \linope{X}$ has finite rank, i.e $\dim(\im(T)) < \infty$,
%   then $T$ is in particular a compact operator. As shown in the previous example, 
%   $T - \lambda I$ is a Fredholm operator of index $0$ and we obtain the space decomposition
%   \begin{equation*}
%     X = \ker(T-\lambda I) \oplus \im(T- \lambda I).
%   \end{equation*}
% \end{example}

\section{Iterates of quasi-compact Markov operators}
\label{sec:iterates}
Using the preceeding results, we consider now the limit of the iterates of an
operator $T \in \boundope{X}$ that has a non-trivial fixed point space.
% To this end, we will consider operators where the spectrum is contained in in the closed 
% unit disk $\uballc$ with spectral radius $1$. There are two cases to discuss separately.
% The first case is where $\sigma(T) \subset \uballo \cup \setof{1}$ and $1$ is an eigenvalue of $T$. 
% In the other case, we discuss operators where the peripheral spectrum 
% $\sigma_{per} := \sigma(T) \setminus \uballo$ contains more eigenvalues than $1$. 
% Here, we can prove a convergence result only when $\sigma_{per}$ is periodic, \ie, when
% there exists $\mu \in \sigma_{per}(T)$ such that $\sigma_{per}(T) = \cset{\mu^k}{k \in \Nn}$.
% Iterates are intensively discussed in the literature for power-compact
% operators Wrobel, Nagel, \ldots. But the limit is not constructive.
% For Markov operators on $\spacecf$ by
% \citesingleauthorall{altomare2013}, \textcite{gavrea2011}.
% %\citesingleauthorall{Badea:2009}.
%   \textbf{write more text about existing research}
% Idea fixpoints ad choquet boundary, but restricted to $\spacecf$. We use fixpoints of the 
% adjoint operator to be more flexible and give an easy criteria to check if the limit exists.
% Consistency with these results, as point separation leads to the invertibility.
We will first introduce the concept of quasi-compact operators in a
proper way and relate the quasi-compactness to the essential spectrum
and the Browder essential spectrum. Note that if the iterates converge
to a finite-rank operator, then this operator is quasi-compact by
definition. Results for the peripheral spectrum of quasi-compact
operators are given with corollaries for the case where the operator
is positive.  Finally, we will state different limit theorems for
quasi-compact operators.

\subsection{Quasi-compact operators and the peripheral spectrum}
For Banach spaces there are several ways to define the essential spectrum for a bounded
linear operator $T \in \linope{X}$. If one considers
the essential spectrum as the largest subset of the spectrum which remains invariant
under compact perturbations one obtains the following definition of $\sigma_{ess}(T)$,
\begin{equation*}
  \sigma_{ess}(T)  := \cset{\lambda \in \Cc}{T - \lambda I \not \in \weylope{X}},
\end{equation*}
which also often said to be the \emph{essential Weyl spectrum}, see \citet{schechter1966}. 
However, this definition of the 
spectrum does not contain the limit points of the spectrum. 
If all these accumulation points are added, then one comes to the definition of 
\citet[107]{browder1961}, where a spectral value $\lambda \in \Cc$ 
is in the essential spectrum, if at least one of the following conditions hold:
\begin{enumerate}
\item $\im(T - \lambda I)$ is not closed in $X$,
\item $\lambda$ is a limit point of the spectrum $\sigma(T)$,
\item $\bigcup_{k \in \Nn}\ker(T - \lambda I)^k$ is infinite dimensional.
\end{enumerate}
This is indeed equivalent to the \emph{essential Browder spectrum},
\begin{equation*}
  \sigma_{b}(T)  = \cset{\lambda \in \Cc}{T - \lambda I \not \in \browdope{X}}.
\end{equation*}
The advantage of using $\sigma_{ess}$ is the perturbation invariance, while the advantage
of the Browder spectrum $\sigma_{b}(T)$ is that $\sigma(T) \setminus \sigma_{b}(T)$ 
is a countable set. Summing up these facts, we have the relation
\begin{equation*}
  \sigma_{ess}(T) \subseteq \sigma_b(T) = \sigma_{ess}(T) \cup \mathrm{acc}\,\sigma(T)  \subset \sigma(T),
\end{equation*}
where $\mathrm{acc}\,\sigma(T)$ denotes all the limit points of $\sigma(T)$.  Nevertheless,
the essential spectral radius is in both definition of the essential spectrum equal, \ie, all 
spectral limit points are on the boundary of $\sigma_{ess}(T)$.

Now, suppose $T \in \linope{X}$ is a quasi-compact operator, \ie, 
the essential spectral radius is less than one. From this it follows that
every spectral value $\lambda \in \sigma(T)$ with modulus larger than the essential spectral radius
is an isolated eigenvalue and the operator $T - \lambda I$ is a Browder operator.
Therefore, there always exists an eigenvalue $\lambda \in \sigma(T)$ 
with modulus equal to the spectral radius $r(T)$.  Moreover, there are only finitely many eigenvalues 
on the peripheral spectrum. The next lemma gives a characterization.
\begin{lemma}
  \label{lem:quasicomp_browder}
  Let $T \in \linope{X}$ be a quasi-compact operator with $r(T) \geq 1$. 
  Then, there is at least one eigenvalue $\lambda$ with $\abs{\lambda} = r(T)$.
  Besides, every spectral value $\lambda \in \sigma(T)$ with $\abs{\lambda} > r_{ess}(T) $ is
  an isolated eigenvalue of $T$ and   $T - \lambda I$ is a Browder operator.
  There are only finitely many eigenvalues on the peripheral spectrum of $T$.  
\end{lemma}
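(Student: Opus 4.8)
The plan is to prove the three assertions out of textual order, deriving all of them from the single observation that every spectral point lying strictly outside the disk of radius $\specradess{T}$ makes $T-\lambda I$ a Browder operator. Since $T$ is quasi-compact we have $\specradess{T}<1\le\specrad{T}$, whence $\specradess{T}<\specrad{T}$; this strict gap is precisely what the hypothesis $\specrad{T}\ge 1$ buys us, and it places the entire peripheral spectrum strictly outside the essential spectral radius.

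First I would settle the middle assertion, since the other two reduce to it. Recall from the discussion of the Browder spectrum that
\begin{equation*}
  \specbr{T} = \specess{T}\cup\mathrm{acc}\,\spec{T},
\end{equation*}
and that the essential spectral radius read off from $\specbr{T}$ coincides with $\specradess{T}$; hence $\abs{\mu}\le\specradess{T}$ for every $\mu\in\specbr{T}$. Therefore, if $\lambda\in\spec{T}$ with $\abs{\lambda}>\specradess{T}$, then $\lambda\notin\specbr{T}$, which by the very definition of the Browder spectrum means $T-\lambda I\in\browdope{X}$. As $\lambda$ still belongs to $\spec{T}$, the fact recalled in the preliminaries---that a spectral point at which $T-\lambda I$ is Browder is an isolated eigenvalue with finite-dimensional spectral projection---gives that $\lambda$ is an isolated eigenvalue of $T$, proving the second assertion.

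For the first assertion I would invoke compactness: $\spec{T}$ is a nonempty compact subset of $\Cc$, so $\specrad{T}=\sup\cset{\abs{\lambda}}{\lambda\in\spec{T}}$ is attained at some $\lambda_0$ with $\abs{\lambda_0}=\specrad{T}\ge 1>\specradess{T}$, and the middle assertion then forces $\lambda_0$ to be an eigenvalue. For the finiteness of the peripheral spectrum I would argue by contradiction: if $\cset{\lambda\in\spec{T}}{\abs{\lambda}=\specrad{T}}$ were infinite, then being a subset of the compact circle of radius $\specrad{T}$ it would possess an accumulation point $\lambda_*$ with $\abs{\lambda_*}=\specrad{T}$; but $\lambda_*\in\mathrm{acc}\,\spec{T}\subseteq\specbr{T}$ forces $\abs{\lambda_*}\le\specradess{T}<\specrad{T}$, a contradiction. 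The step I expect to require the most care is exactly this last one, together with the underlying bookkeeping that guarantees accumulation points of $\spec{T}$ sit inside $\specbr{T}$ and hence have modulus at most $\specradess{T}$; once the equality of the two essential spectral radii is granted, the compactness argument closes cleanly and all three claims follow.
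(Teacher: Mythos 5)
Your proposal is correct and takes essentially the same route as the paper's proof: both exploit the gap $\specradess{T} < 1 \leq \specrad{T}$, identify every spectral value of modulus exceeding $\specradess{T}$ as lying outside the Browder spectrum and hence being an isolated eigenvalue with $T - \lambda I$ Browder (the paper re-derives this step via Lay's theorem on poles of the resolvent and Heuser's Proposition 50.3, where you simply cite the equivalent fact recalled in the preliminaries), and both establish finiteness of the peripheral spectrum by the same accumulation-point contradiction. Your existence argument---attainment of the spectral radius on the nonempty compact spectrum, followed by the middle assertion---is in fact a cleaner justification than the paper's corresponding sentence.
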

\begin{proof}
  By the definition of quasi-compactness, we have $r_{ess}(T) < 1$ and
  all of the spectral values outside with modulus larger than
  $r_{ess}(T)$ are isolated.  As already discussed above, $T - \lambda
  I$ is a Browder operator. If $\lambda \not\in \sigma_b(T)$, then
  by Theorem 1 in \cite{lay1968}, $\lambda$ is a pole of the resolvent of finite rank.
  Applying \citet[Proposition 50.3]{heuser1982}, 
  we derive that $\asc{T - \lambda I}$ is
  positive and hence, $\lambda$ is an eigenvalue of $T$.  As all the
  cluster points of the spectrum are on the boundary of
  $\sigma_{ess}(T)$, there is an eigenvalue $\lambda \in \sigma(T)$
  with $\abs{\lambda} = 1$.
  Finally, there are only finitely many on the peripheral spectrum as otherwise there
  would be an accumulation point outside of the essential Browder spectrum.
\end{proof}
We now show that for the eigenvalues $\lambda$ of quasi-compact operators $T \in \linope{X}$ 
lying on the peripheral spectrum, \ie, eigenvalues $\lambda$ with modulus $r(T)$, the associated
Browder operator $T - \lambda I$ has ascent one, whereas the dimension of the associated eigenspace
can be arbitrary but finite. This result has already been shown in a similar setting by 
\citet[Proposition V.1]{hennion2001} and is stated here with less assumptions on the operator $T$.
\begin{lemma}
  \label{lem:periph_ascent_one}
  Let $T \in \linope{X}$ be a quasi-compact operator with $r(T) \geq 1$ such that 
  $\sup_{n \in \Nn}r(T)^{-n}\normop{T^n} < \infty$. Then for every peripheral eigenvalue $\lambda \in \sigma_{per}(T)$ the associated Browder operator $T- \lambda I$ has ascent one.
\end{lemma}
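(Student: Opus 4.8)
The plan is to argue by contradiction, converting a hypothetical Jordan block of size at least two into linear growth of $\normop{T^n}$ that the power-boundedness assumption forbids. First I would record what the hypotheses already supply. By \autoref{lem:quasicomp_browder}, every peripheral eigenvalue $\lambda \in \specper{T}$ (so $\abs{\lambda} = r(T)$) has the property that $T - \lambda I$ is a Browder operator; since $\lambda$ is an eigenvalue, the ascent $p = \asc{T - \lambda I}$ is a positive finite integer. It therefore suffices to exclude the case $p \geq 2$.

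Suppose then, toward a contradiction, that $p \geq 2$. Then $\ker(T - \lambda I)^2 \supsetneq \ker(T - \lambda I)$, so I can choose $x \in \ker(T - \lambda I)^2$ with $y := (T - \lambda I)x \neq 0$ and $(T - \lambda I)^2 x = 0$, i.e. $(T - \lambda I)y = 0$. Writing $N := T - \lambda I$, the relation $N^2 x = 0$ collapses the binomial expansion of $T^n = (\lambda I + N)^n$ after two terms when applied to $x$:
\begin{equation*}
  T^n x = \lambda^n x + n\,\lambda^{n-1} y, \qquad n \in \Nn.
\end{equation*}
The heart of the argument is then a size estimate. Using the reverse triangle inequality together with $\abs{\lambda} = r(T)$ and $\norm{T^n x} \leq \normop{T^n}\,\norm{x}$, I obtain
\begin{equation*}
  \normop{T^n}\,\norm{x} \geq \norm{T^n x} \geq n\,r(T)^{n-1}\norm{y} - r(T)^n\norm{x}.
\end{equation*}
Dividing by $r(T)^n$, which is positive since $r(T) \geq 1$, yields
\begin{equation*}
  r(T)^{-n}\normop{T^n}\,\norm{x} \geq \frac{n}{r(T)}\,\norm{y} - \norm{x},
\end{equation*}
whose right-hand side tends to $+\infty$ as $n \to \infty$ because $\norm{y} > 0$. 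This contradicts the standing hypothesis $\sup_{n \in \Nn} r(T)^{-n}\normop{T^n} < \infty$, which bounds the left-hand side. Hence $p \geq 2$ is impossible and $\asc{T - \lambda I} = 1$.

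The only step requiring care — which I expect to be the main obstacle to phrase cleanly rather than to prove — is justifying that the two summands of $T^n x$ cannot cancel, so that the $n\,\lambda^{n-1}y$ term genuinely dominates. This is handled by the reverse triangle inequality together with $y \neq 0$; equivalently, one notes that $x$ and $y$ are linearly independent, since $y = cx$ would give $Nx = cx$ and then $N^2 x = c^2 x = 0$, forcing $c = 0$. Everything else is the routine nilpotent-plus-scalar power computation and a single normalized estimate.
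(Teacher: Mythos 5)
Your proof is correct and follows essentially the same route as the paper: both rest on the collapsed binomial expansion $T^n x = \lambda^n x + n\lambda^{n-1}(T-\lambda I)x$ for $x \in \ker(T-\lambda I)^2$ together with the power-boundedness hypothesis $\sup_n r(T)^{-n}\normop{T^n} < \infty$. The only difference is cosmetic: the paper argues directly, bounding $\norm{(T-\lambda I)x} \leq \frac{r(T)(1+B)}{n}\norm{x}$ via the plain triangle inequality and letting $n \to \infty$, whereas you rearrange the same estimate into a contradiction using the reverse triangle inequality (and your concluding remark about linear independence of $x$ and $y$ is not actually needed).
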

\begin{proof}
  Note that the existence of a peripheral eigenvalue has been shown in the lemma above.
  We consider now the eigenvalue $\lambda \in \sigma_p(T)$ with $\abs{\lambda} = r(T)$. 
  Let $x \in \ker(T - \lambda I)^2$. Then we can represent $T^nx$ for all positive integers $n$ by
  \begin{equation*}
    T^nx = (\lambda I + (T - \lambda I))^nx = \sum_{k=0}^n\binom{n}{k}\lambda^{n-k} (T - \lambda I)^kx
    = \lambda^nx - n \lambda^{n-1} (T - \lambda I)x.
  \end{equation*}
  We will show now that $(T - \lambda I)x = 0$. To this end, using that
  $\abs{\lambda} = r(T)$ and that there exists of $B > 0$ such that
  $r(T)^{-m}\normop{T^m} < B$ for all positive integers $m$, we calculate:
  \begin{equation*}
    \norm{n \lambda^{n-1} (T - \lambda I)x} = \norm{\lambda^n x - T^nx} \leq \abs{\lambda^n}\norm{x} + \norm{T^nx}
    \leq r(T)^n\norm{x} + \normop{T^n}\norm{x} \leq r(T)^n(1 + B)\norm{x}.
  \end{equation*}
  It is now easy to see that $\norm{(T - \lambda I)x} \leq \frac{r(T)(1 + B)}{n}\norm{x}$
  and we conclude that $x \in \ker(T - \lambda I)$ as $n$ was arbitrary.
\end{proof}

In the following, we consider the case when $r(T) = \normop{T}$. Operators of this kind are said to be
\emph{normaloid} and have been discussed in \citet[Chapter 54]{heuser1982}. We obtain the following corollary:
\begin{corollary}
  \label{cor:quasicomp_browder_acent_one}
  Let $T \in \linope{X}$ be a quasi-compact operator with $r(T) = \normop{T}$.
  The exists at least one eigenvalue with modulus $r(T)$. Furthermore, 
  for every peripheral eigenvalue $T - \lambda I$ is a Browder operator with ascent one.
\end{corollary}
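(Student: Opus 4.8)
The plan is to combine the two prior results under the hypothesis $r(T) = \normop{T}$. First I would verify that the quasi-compactness assumption together with $r(T) = \normop{T}$ fits the framework of \autoref{lem:quasicomp_browder}. Since $T$ is normaloid, we have $r(T) = \normop{T} \geq 1$ whenever $T$ is nontrivial; more precisely, if $r(T) < 1$ then by quasi-compactness the peripheral spectrum would be empty and there is nothing to prove, so we may assume $r(T) \geq 1$. Then \autoref{lem:quasicomp_browder} applies directly and yields the existence of at least one eigenvalue $\lambda$ with $\abs{\lambda} = r(T)$, and moreover guarantees that for every such peripheral eigenvalue $\lambda$ the operator $T - \lambda I$ is a Browder operator. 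This settles the first two assertions of the corollary.

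The remaining claim is that each peripheral Browder operator $T - \lambda I$ has ascent one. The key observation is that the normaloid condition $r(T) = \normop{T}$ provides exactly the uniform bound required to invoke \autoref{lem:periph_ascent_one}. Indeed, for a normaloid operator we have $\normop{T^n} \leq \normop{T}^n = r(T)^n$ for all positive integers $n$, since the operator norm is submultiplicative and $\normop{T} = r(T)$. Hence $r(T)^{-n}\normop{T^n} \leq 1$ for all $n \in \Nn$, so the supremum condition $\sup_{n \in \Nn} r(T)^{-n}\normop{T^n} < \infty$ of \autoref{lem:periph_ascent_one} is satisfied with bound $B = 1$. Applying that lemma then immediately gives that $T - \lambda I$ has ascent one for every peripheral eigenvalue $\lambda$.

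I would therefore structure the proof as two short invocations: cite \autoref{lem:quasicomp_browder} for the existence of a peripheral eigenvalue and the Browder property, then check the normaloid bound $\normop{T^n} \leq r(T)^n$ and apply \autoref{lem:periph_ascent_one} to obtain ascent one. The main (and only) obstacle is the bookkeeping around the case $r(T) < 1$: the hypothesis as stated does not literally require $r(T) \geq 1$, so one must either note that quasi-compactness with $r(T) < 1$ forces an empty peripheral point spectrum (making the ascent claim vacuous), or simply observe that the interesting content lives in the regime $r(T) \geq 1$ where both prior lemmas apply verbatim. Everything else is a direct consequence of submultiplicativity of the operator norm together with the normaloid identity $r(T) = \normop{T}$.
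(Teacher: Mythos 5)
Your proposal is correct and takes essentially the same route as the paper: the paper's proof likewise observes that $r(T) \leq \normop{T^n}^{1/n} \leq \normop{T}$ yields $r(T)^{-n}\normop{T^n} \leq 1$ for all $n$, and then invokes \autoref{lem:quasicomp_browder} and \autoref{lem:periph_ascent_one}. One caveat on your side remark about $r(T) < 1$: quasi-compactness does not make the peripheral spectrum empty (the spectrum is compact and nonempty, so spectral values of modulus $r(T)$ always exist), and the existence claim is then not vacuous but can genuinely fail (e.g.\ for a bilateral shift scaled by $1/2$, which is normaloid and quasi-compact yet has no eigenvalues) --- however, the paper silently ignores this case too, tacitly assuming $r(T) \geq 1$ as both cited lemmas require, so your core argument matches the intended scope exactly.
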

\begin{proof}
  For all positive integers $n$ the inequality $r(T) \leq \normop{T^n}^{1/n} \leq \normop{T}$ holds.
  Therefore, $r(T)^{-n}\normop{T^n} \leq 1$ for all $n$. The result follows by 
  \autoref{lem:quasicomp_browder} and \autoref{lem:periph_ascent_one}.
\end{proof}

If we $X$ is a Banach lattice and $T$ is positive even stronger results can be made.
According to \citeauthor{lotz1968}, \citet{krein1948} 
have first shown that every positive compact operator on a Banach lattice with 
$r(T) = \normop{T}$ has a cyclic peripheral spectrum.
This result has been generalized in \citet[Theorem 4.10]{lotz1968}, where the peripheral spectrum
 of a positive operator $T \in \linope{X}$  on a Banach lattice $X$ is cyclic if $r(T)$ 
is a pole of the resolvent. 
Furthermore, \citet{lotz1968} concluded that the peripheral spectrum
of every positive compact operator is cyclic.

The next corollary sums up these results for positive quasi-compact operators.
\begin{corollary}
  \label{cor:quasicomp_browder_acent_one}
  Let $T \in \linope{X}$ be a positive quasi-compact operator with $r(T) = \normop{T} = 1$.
  Then $1 \in \sigma(T)$, \ie, $T - I$ is a Browder operator of ascent one.
  Furthermore, the peripheral spectrum is cyclic and consists only of roots of unity.
\end{corollary}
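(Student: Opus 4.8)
The plan is to assemble the claimed conclusions from the three general facts already established for quasi-compact operators, namely \autoref{lem:quasicomp_browder}, \autoref{lem:periph_ascent_one}, and the cyclicity results of \citet{lotz1968}. First I would observe that since $T$ is positive with $r(T) = \normop{T} = 1$, the normaloid hypothesis of the preceding \emph{corollary} is met, so $\sup_{n}r(T)^{-n}\normop{T^n} \leq 1 < \infty$ holds trivially. This immediately makes both \autoref{lem:quasicomp_browder} and \autoref{lem:periph_ascent_one} applicable with $r(T) = 1$.

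Next I would extract the existence of a peripheral eigenvalue of modulus $1$ from \autoref{lem:quasicomp_browder}. The slightly finer point is that $1$ itself lies in the peripheral spectrum: here I would invoke the cyclicity of the peripheral spectrum. By \citet[Theorem 4.10]{lotz1968}, a positive operator on a Banach lattice whose spectral radius is a pole of the resolvent has cyclic peripheral spectrum; since $T$ is quasi-compact every peripheral spectral value is an isolated eigenvalue and hence a pole of the resolvent, so the hypothesis is satisfied. Cyclicity means the peripheral spectrum is invariant under multiplication by each of its elements; combined with the fact that it is a nonempty finite set of unimodular numbers (finiteness from \autoref{lem:quasicomp_browder}), this forces the peripheral eigenvalues to form a finite subgroup of the unit circle, hence to consist exactly of roots of unity, and in particular to contain $1$.

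Finally, with $1 \in \spec{T}$ confirmed as a peripheral eigenvalue, I would apply \autoref{lem:periph_ascent_one} to conclude that $T - I$ has ascent one. Since $T$ is quasi-compact and $|1| = 1 > r_{ess}(T)$, \autoref{lem:quasicomp_browder} already guarantees that $T - I$ is a Browder operator; the ascent-one statement then upgrades this to a Browder operator of ascent exactly one. The main obstacle I anticipate is the bookkeeping around cyclicity: one must argue carefully that a finite cyclic (multiplicatively invariant) set of unimodular numbers is precisely a finite cyclic group of roots of unity containing $1$, rather than merely asserting it. The remaining steps are direct citations of the lemmas and therefore routine.
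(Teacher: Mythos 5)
Your overall architecture matches the paper's proof (same two lemmas, same two citations to \citet{lotz1968}), but there is a circularity in the one step where you deviate. You try to obtain $1 \in \sigma(T)$ as a \emph{consequence} of cyclicity, invoking \citet[Theorem 4.10]{lotz1968} with the justification that ``since $T$ is quasi-compact every peripheral spectral value is an isolated eigenvalue and hence a pole of the resolvent, so the hypothesis is satisfied.'' That theorem's hypothesis is that $r(T)$ \emph{itself} --- here the specific point $1$ --- is a pole of the resolvent. A pole of the resolvent is in particular a spectral point, so verifying this hypothesis presupposes $1 \in \sigma(T)$, which is exactly what you are trying to prove. At that stage of your argument, \autoref{lem:quasicomp_browder} only guarantees \emph{some} eigenvalue $\lambda$ with $\abs{\lambda} = 1$; it does not place the point $1$ in the spectrum, and quasi-compactness alone cannot (consider $T = \lambda I$ perturbation heuristics with $\lambda \neq 1$ unimodular --- positivity is what rules this out). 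The paper closes this gap before touching cyclicity: its first line cites the classical fact (attributed there to \citet{lotz1968}) that $r(T) \in \sigma(T)$ whenever $T$ is \emph{positive} on a Banach lattice. With $1 = r(T) \in \sigma(T)$ in hand, quasi-compactness makes $1$ an isolated eigenvalue and a pole, Theorem 4.10 then legitimately yields cyclicity, and the rest of your argument (finiteness from \autoref{lem:quasicomp_browder}, the normaloid bound $\normop{T^n} \leq 1$ feeding \autoref{lem:periph_ascent_one} for ascent one) goes through exactly as in the paper. So the fix is a single citation --- but as written, the proposal never uses positivity to locate $1$ in the spectrum, and that is the only place positivity is indispensable.

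A minor secondary point: cyclicity in the Lotz--Schaefer sense means $e^{i\theta} \in \specper{T}$ implies $e^{ik\theta} \in \specper{T}$ for all $k \in \Zz$; together with finiteness this forces each peripheral value to be a root of unity and (taking $k = 0$) gives $1 \in \specper{T}$, but it does \emph{not} make $\specper{T}$ closed under products of distinct elements, so your assertion that it ``forms a finite subgroup of the unit circle'' is slightly stronger than what cyclicity delivers. The conclusions you draw from it are nevertheless correct under the weaker, correct reading.
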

\begin{proof}
  It has been shown by \citet{lotz1968} that $r(T) \in \sigma(T)$ if $T$ is positive. 
  In the case where $T$ is a quasi-compact positive operator with $r(T) = 1$,  
  $T$ has real eigenvalue one and $T - I$ is a Browder operator with ascent one.

  The peripheral spectrum contains only finitely many eigenvalues of $T$ and $1 \in \sigma_{per}(T)$.
  As the peripheral spectrum is cyclic, see the above mentioned result in \cite{lotz1968}, we conclude
  that $\sigma_{per}(T)$ can only contain roots of unity.
\end{proof}

If $T \in \linope{X}$ is a positive quasi-compact operator on a Banach lattice $X$ with
$\normop{T} = r(T) = 1$. Then of course $\sigma(T) \subset \uballc$ and using the preceding 
results we obtain that $1$ is an isolated eigenvalue of $T$ and the peripheral spectrum is cyclic.
Let us denote by the positive integer $l$ the number of spectral values in the peripheral spectrum.
 There are now two cases to discuss separately: 
\begin{enumerate}
\item $l = 1$: then $\sigma_{per}(T) = \setof{1}$, otherwise
\item $\sigma_{per}(T) = \cset{\rho_l^k}{k \in \setof{1,\ldots, l}}$,
\end{enumerate}
where $\rho_l$ are the $l$-th roots of unity.

The first case has already been characterized by \citet{katznelson1986}, who have
been shown that for every linear operator $T$ on a Banach space $X$ with $\normop{T} \leq 1$
the limit
\begin{equation*}
\lim_{n\to\infty}\normop{T^{n+1} - T^n} = \lim_{n\to\infty}\normop{T^{n}(T - I)} = 0 
\end{equation*}
holds if (and only if) $\sigma_{per}(T) \subset \setof{1}$.

\subsection{Operators with ascent one}
The last results have shown that if $\lambda$ is a peripheral eigenvalue of
quasi-compact operator $T$, the operator $T - \lambda I$ has always ascent one.
In this case the spaces $M^1_\lambda$ and $\Lambda^1_\lambda$ contain only
eigenvectors of $T$ and $T^*$ respectively. They can now be represented by
\begin{align}
  \label{eq:fixpoints2}
  &M^1_\lambda = \ker(T - \lambda I) = \cset{x \in X}{Tx  = \lambda x},\\
  \label{eq:fixpointsdual2}
  &\Lambda^1_\lambda = \ker(T^* - \lambda I) = \cset{x^* \in X^*}{ x^*(Tx) = x^*(\lambda x) \text{ for all } x \in X}.
\end{align}
Let us denote by $n = \dim(M^1_\lambda) = \dim(\Lambda^1_\lambda)$.
The result of \autoref{thm:projection_invariants2} yields a projection $P:X \to M^1_\lambda$ onto 
the eigenspace space associated with $\lambda$. Recall that the Gram matrix
consisting of only of dual eigenvectors acting on the eigenvectors of $T$,
\begin{equation*}
  G := (\Phi^*\Phi) =
  \begin{pmatrix}
    e^*_1(e_1) & \cdots & e^*_1(e_n)\\
    \vdots & & \vdots\\
    e^*_m(e_1) & \cdots & e^*_m(e_n)
  \end{pmatrix}  \in \Cc^{n \times n},
\end{equation*}
is invertible. Setting $A = (a_{ij}) = G^{-1}$, the projection has the form 
\begin{equation*}
  Px = \Phi(\Phi^*\Phi)^{-1}\Phi^*(x) = \sum_{i=1}^n\sum_{j=1}^m a_{ij} e^*_j(x) e_i, \qquad \text{for } x \in X.
\end{equation*}
The next lemma gives a characterization of this projection.
\begin{theorem}
  \label{thm:ascent_one_projection}
  Let $T \in \linope{X}$ and $\lambda \in \sigma_p(T)$ such that $T - \lambda I$ is a Browder operator.
  Then the following two statements are equivalent:
  \begin{enumerate}
  \item $T - \lambda I$ has ascent one,
  \item there exists a projection $P \in \compope{X}$ such that $TP = PT = \lambda P$,
  \end{enumerate}
\end{theorem}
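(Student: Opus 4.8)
The plan is to prove both implications of this equivalence directly, using the explicit projection $P = \Phi(\Phi^*\Phi)^{-1}\Phi^*$ that has already been constructed in \autoref{thm:projection_invariants2} and \autoref{thm:space_decomposition_ker_im}. Since $T - \lambda I$ is assumed to be a Browder operator, its Gram matrix $G = \Phi^*\Phi$ is invertible (by the lemma preceding \autoref{thm:space_decomposition_ker_im}), so the projection $P$ onto $\ker(T-\lambda I)^p$ is well-defined, and by \autoref{thm:space_decomposition_ker_im} we have $\im(P) = \ker(T-\lambda I)^p$ and $\ker(P) = \im(T-\lambda I)^p$.

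First I would prove $(1) \Rightarrow (2)$. If $T - \lambda I$ has ascent one, then $p = 1$, so $\im(P) = \ker(T - \lambda I)$ and $\ker(P) = \im(T - \lambda I)$. The key point is that $\im(P) = \ker(T-\lambda I)$ now consists of genuine eigenvectors: for $x \in \im(P)$ we have $(T - \lambda I)x = 0$, i.e. $Tx = \lambda x$. To show $TP = \lambda P$, take any $x \in X$; since $Px \in \ker(T - \lambda I)$ it follows immediately that $T(Px) = \lambda (Px)$, giving $TP = \lambda P$. For $PT = \lambda P$ I would use the space decomposition $X = \ker(T - \lambda I) \oplus \im(T - \lambda I)$ together with the $T$-invariance of both summands. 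Concretely, decompose $x = Px + (x - Px)$ with $Px \in \ker(T-\lambda I)$ and $x - Px \in \im(T - \lambda I) = \ker(P)$; applying $T$ and using that $\im(T-\lambda I)$ is $T$-invariant (so $T(x - Px) \in \ker(P)$), I would compute $PTx = P(TPx) + P(T(x-Px)) = P(\lambda Px) + 0 = \lambda Px = \lambda Px$, which gives $PT = \lambda P$. Thus $TP = PT = \lambda P$, and $P$ is the desired compact projection.

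For the converse $(2) \Rightarrow (1)$, suppose a projection $P \in \compope{X}$ exists with $TP = PT = \lambda P$. The relation $TP = \lambda P$ shows that $\im(P) \subseteq \ker(T - \lambda I)$, since for $y = Px$ we get $Ty = TPx = \lambda Px = \lambda y$. I would then show that $x \in \ker(T-\lambda I)^2$ forces $x \in \ker(T - \lambda I)$, which is exactly the statement that the ascent is one. The idea is to exploit $PT = \lambda P$ to push the nilpotent part into the range of $P$ where $T - \lambda I$ acts as zero: if $(T-\lambda I)^2 x = 0$, set $y = (T - \lambda I)x \in \ker(T-\lambda I)$, and using $(T - \lambda I)P = 0$ and $P(T-\lambda I) = 0$ (both consequences of the commuting relation) I would argue that $y$ must vanish. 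The main obstacle here is verifying cleanly that $P$ acts as the identity on $\ker(T-\lambda I)$ and annihilates $\im(T-\lambda I)$ — this uses that $T - \lambda I$ is Browder, so $\im(P)$ and $\ker(P)$ must coincide with the generalized kernel and range by uniqueness of the spectral projection. Once $P$ is identified as the spectral projection onto $\ker(T-\lambda I)$ of dimension $n = \dim\ker(T-\lambda I)$, the relation $\im(P) = \ker(T - \lambda I)$ combined with the Browder property forces $\ker(T-\lambda I)^2 = \ker(T-\lambda I)$, i.e. ascent one.

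I expect the delicate step to be the converse direction, specifically pinning down that the \emph{abstract} projection $P$ satisfying the commuting relations necessarily equals the explicitly constructed spectral projection. The cleanest route is to observe that $TP = PT = \lambda P$ already forces $\im(P) \subseteq \ker(T-\lambda I)$ and $\ker(T-\lambda I) \subseteq \im(P)$ (the latter because $P$ fixes eigenvectors: any eigenvector $v$ satisfies $Pv = \lambda^{-1}TPv = \lambda^{-1}PTv = \lambda^{-1}P(\lambda v) = Pv$, so one argues via $P$ restricted to the eigenspace being the identity using $PT = \lambda P$ and $P^2 = P$). Establishing $\im(P) = \ker(T - \lambda I)$ exactly is the crux; after that, the equality $\dim\ker(T-\lambda I) = \dim\ker(T-\lambda I)^p = n$ from the Browder hypothesis forces $p = 1$.
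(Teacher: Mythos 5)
Your forward direction is correct and only mildly different from the paper's: you obtain $PT=\lambda P$ from the decomposition $X=\ker(T-\lambda I)\oplus\im(T-\lambda I)$ together with the $T$-invariance of $\im(T-\lambda I)=\ker(P)$, whereas the paper computes directly with the explicit form $P=\Phi A\Phi^*$, using that the $e_i$ are eigenvectors of $T$ and the $e_j^*$ are eigenvectors of $T^*$, i.e. $e_j^*(Tx)=\lambda e_j^*(x)$. Both arguments are sound and rest on the same constructed projection from \autoref{thm:projection_invariants2}.

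The converse, however, contains a genuine gap, and it sits exactly where you flag it. From the relations $TP=PT=\lambda P$ and $P^2=P$ alone one \emph{cannot} conclude $\ker(T-\lambda I)\subseteq\im(P)$: the operator $P=0$ satisfies all of these relations, so no algebraic manipulation of them can force $P$ to fix eigenvectors. Your displayed chain indeed collapses to the tautology $Pv=Pv$, and the fallback appeal to ``uniqueness of the spectral projection'' fails for the same reason --- an abstract projection satisfying these relations need not be a spectral projection at all. What the paper actually does is read statement (2) as asserting the existence of a projection \emph{onto} the eigenspace: its proof of this direction opens by taking $\im(P)=\ker(T-\lambda I)$ as given. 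Once that is granted, your computation is precisely the paper's: for $x\in\ker(T-\lambda I)^2$ one has $(T-\lambda I)x\in\ker(T-\lambda I)=\im(P)$, hence $(T-\lambda I)x=P(T-\lambda I)x=PTx-\lambda Px=\lambda Px-\lambda Px=0$, which gives ascent one. So to repair your proof you should strengthen the hypothesis in (2) to include $\im(P)=\ker(T-\lambda I)$ (the paper's implicit reading); with (2) as literally stated, the implication $(2)\Rightarrow(1)$ is false, and no completion of your argument can succeed.
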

\begin{proof}
  Suppose that the first statement holds. Then we obtain a projection $P \in \compope{X}$ 
  from \autoref{thm:projection_invariants2}. We have the property $T \circ P = \lambda P$, 
  because for $x \in X$ we obtain
  \begin{equation*}
    (T \circ P)(x) = T(\sum_{i=1}^n\sum_{j=1}^m a_{ij} e^*_j(x) e_i) 
    = \sum_{i=1}^n\sum_{j=1}^m a_{ij} e^*_j(x) T(e_i) = \lambda P(x),
  \end{equation*}
  as $Te_i = \lambda e_i$ by \eqref{eq:fixpoints2}. Similarly, we obtain $P \circ T = \lambda P$.
  Namely, for $x \in X$ using $e^*_j(Tx) = e^*_j(\lambda x)$ by \eqref{eq:fixpointsdual2} it holds that
  \begin{equation*}
    (P\circ T)(x) = \sum_{i=1}^n\sum_{j=1}^m a_{ij} e^*_j(Tx) e_i 
    = \sum_{i=1}^n \sum_{j=1}^m a_{ij} e^*_j(\lambda x) e_i = \lambda P(x).
  \end{equation*}

  Now we show that if there exists a projection $P \in \compope{X}$ with $TP = PT = \lambda P$,
  then $T - \lambda I$ has ascent one.  
  As $\ker(T - \lambda I) \subset \ker(T - \lambda I)^2$ and $\im(P) = \ker(T - \lambda I)$, 
  it is enough to show that $\ker(T - \lambda I)^2 \subset \im(P)$.
  Suppose $x \in \ker(T - \lambda I)^2$. Then $(T - \lambda I)^2x = 0$ and
  $(T - \lambda I)x \in \ker(T - \lambda I) = \im(P)$. Therefore, there is $y \in \im(P)$ such that
  $Py = (T - \lambda I)x$. Then
  \begin{equation*}
    y = Py = P^2y = P(T - \lambda I)x = PTx - \lambda Px = \lambda Px - \lambda Px = 0.
  \end{equation*}
  Thus, $y = 0$ and we obtain using $0 = y = Py = (T - \lambda I)x$
  the final result, namely $x \in \ker(T - \lambda I) = \im(P)$.
\end{proof}

\begin{lemma}
  \label{lem:powers_ascent_one}
  Let $T \in \linope{X}$ and $\lambda \in \sigma_p(T)$. Suppose there exists a projection 
  $P \in \compope{X}$ such that $TP = PT = \lambda P$. Then
  \begin{equation*}
    (T - TP)^n = T^n - T^nP = T^n - \lambda^n P
  \end{equation*}
  holds for all $n \in \Nn$.
\end{lemma}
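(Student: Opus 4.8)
The plan is to split the claimed double identity into its two constituent equalities and dispatch each with an elementary induction/commutation argument; no structural machinery (Fredholm theory, ascent, the Gram matrix) is needed here, since the statement is purely algebraic. The first task is to establish $T^nP = \lambda^n P$ for every $n \in \Nn$. This follows directly from the hypothesis $TP = \lambda P$ by induction on $n$: the base case $n=1$ is the hypothesis itself, and assuming $T^kP = \lambda^k P$ one computes $T^{k+1}P = T(T^kP) = \lambda^k(TP) = \lambda^{k+1}P$. This already yields the second equality in the statement, namely $T^n - T^nP = T^n - \lambda^n P$.

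The second task is the identity $(T - TP)^n = T^n - T^nP$. Here I would rewrite $T - TP = T(I-P)$ and set $Q := I - P$. The full hypothesis $TP = PT = \lambda P$ gives $TP = PT$, hence $T$ commutes with $P$ and therefore also with $Q$, i.e.\ $TQ = QT$. Since $Q$ is the complementary projection to $P$, it is idempotent, so $Q^n = Q$ for all $n$. Using that $T$ and $Q$ commute, the $n$-th power factors as $(TQ)^n = T^nQ^n = T^nQ = T^n(I-P) = T^n - T^nP$, which is precisely the desired identity.

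Combining the two steps gives $(T - TP)^n = T^n - T^nP = T^n - \lambda^n P$ for all $n \in \Nn$, as claimed. The only point requiring care — the \emph{main obstacle}, such as it is — is the bookkeeping around commutativity: one must invoke the full strength of $TP = PT$ rather than merely $TP = \lambda P$, because without $T$ and $Q$ commuting the power $(TQ)^n$ would not collapse to $T^nQ^n$. Everything else is routine.
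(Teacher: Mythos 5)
Your proof is correct and takes essentially the same route as the paper: both arguments factor $T - TP = T(I-P)$, use $TP = PT$ to commute $T$ past the complementary projection $I-P$, exploit its idempotence to collapse $\left(T(I-P)\right)^n = T^n(I-P)^n = T^n(I-P)$, and then substitute $T^nP = \lambda^n P$. The only difference is cosmetic --- you spell out the induction for $T^nP = \lambda^n P$, which the paper leaves implicit.
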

\begin{proof}
  We will use the fact that if $P$ is a projection then $I - P$ is a projection as well.
  Also note that $I - P$ commutes with $T$, as
  \begin{equation*}
    (I - P)T = T - TP = T - PT = T(I - P).
  \end{equation*}
  Now we derive the result with the following steps:
  \begin{align*}
    (T - TP)^n &= (T - TP)^n = (T(I - P))^n = T^n(I - P)^n\\
    &= T^n(I - P) = T^n - T^nP = T^n - \lambda^n P.
  \end{align*}
\end{proof}

\subsection{The limit of the iterates of quasi-compact operators}
We assume in the following that $\normop{T} = 1$ and $r(T) = 1$. First , we will
restrict us to the fixed point space of a quasi-compact operator $T \in \linope$
and assume that $\sigma(T) \subset \uballo \cup \setof{1}$, \ie, $1$ is the only
peripheral eigenvalue of $T$. In this case, if $T - I \in \browdope{X}$ has
ascent one and the iterates will converge to the projection $P$. Later we will
consider the case where the peripheral spectrum is cyclic.

In the proof of our main result we will need the following lemma. As it is more convenient, 
we omit the proof here but prove it at the end of this section. The lemma states that
isolated spectral values can be removed by the projection operator on the corresponding generalized
eigenspace.
\begin{lemma}
  \label{lem:spectral_removal}
  %\mbox{}\fxnote{check reqs and proof}
  Let $T \in \linope{X}$ and $\lambda \in \sigma_p(T)$ such that 
  $T - \lambda I \in \browdope{X}$ with ascent $p$.
  Let $P$ be denote the projection onto $\ker(T - \lambda I)^p$, defined by 
  \autoref{thm:projection_invariants2}. 
  Then $\lambda$ is an isolated spectral value and $\lambda \notin \sigma(T - TP)$.
\end{lemma}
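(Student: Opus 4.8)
The plan is to split the assertion into its two parts. That $\lambda$ is isolated is almost immediate: since $T-\lambda I$ is a Browder operator with positive finite chain length $p$, it is a pole of the resolvent of order $p$ (as recalled in the discussion preceding \eqref{eq:riesz_spectral-projection-browder}), and poles of the resolvent are isolated points of $\sigma(T)$. Consequently $\sigma(T)\setminus\setof{\lambda}$ is again closed and $\lambda$ lies at positive distance from it. The substance of the lemma is therefore the invertibility of $(T-TP)-\lambda I$, which I would establish directly from the space decomposition rather than through the contour-integral representation \eqref{eq:riesz_spectralpoj-def}.

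First I would record the block structure. By \autoref{thm:space_decomposition_ker_im} we have $\im(P)=\ker(T-\lambda I)^p$ and $\ker(P)=\im(T-\lambda I)^p$, and both summands are $T$-invariant: $\ker(T-\lambda I)^p$ trivially, and $\im(T-\lambda I)^p$ because $T$ commutes with $(T-\lambda I)^p$. Writing $x\in X$ as $x=u+v$ with $u=Px\in\im(P)$ and $v=(I-P)x\in\ker(P)$, a short computation using $T-TP=T(I-P)$ and $Tv\in\ker(P)$ gives $((T-TP)-\lambda I)x = -\lambda u + (T-\lambda I)v$, where $-\lambda u\in\im(P)$ and $(T-\lambda I)v\in\ker(P)$. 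Thus $(T-TP)-\lambda I$ is block-diagonal with respect to $X=\im(P)\oplus\ker(P)$, acting as $-\lambda I$ on the first summand and as the restriction $(T-\lambda I)\big|_{\ker(P)}$ on the second, and invertibility of the whole operator reduces to invertibility of each block.

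The block $-\lambda I$ on $\im(P)$ is invertible precisely because $\lambda\neq 0$, which holds for every peripheral eigenvalue (where $\abs{\lambda}=r(T)\geq 1$). The main obstacle is the second block: I must show that $T-\lambda I$ maps $\im(T-\lambda I)^p$ bijectively onto itself. Surjectivity follows from the finite descent of the Browder operator, since $(T-\lambda I)\im(T-\lambda I)^p=\im(T-\lambda I)^{p+1}=\im(T-\lambda I)^p$; injectivity follows from the finite ascent together with \eqref{eq:finite_asc-desc_properties}, which yields $\ker(T-\lambda I)\cap\im(T-\lambda I)^p=\setof{0}$. Since $\im(T-\lambda I)^p$ is closed (the Fredholm range, as already used in \autoref{thm:space_decomposition_ker_im}), the open mapping theorem upgrades this bijection to a topological isomorphism, so the second block is invertible. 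Assembling the two inverses gives an explicit bounded inverse $-\lambda^{-1}P + \big((T-\lambda I)\big|_{\ker(P)}\big)^{-1}(I-P)$ of $(T-TP)-\lambda I$, whence $\lambda\notin\sigma(T-TP)$. As a sanity check, the same computation shows $\sigma(T-TP)=\setof{0}\cup(\sigma(T)\setminus\setof{\lambda})$, exactly the spectral picture expected from identifying $P$ with the spectral projection $P_{\setof{\lambda}}$.
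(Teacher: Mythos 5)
Your proof is correct, and the comparison with the paper has a twist: the paper promises to prove \autoref{lem:spectral_removal} ``at the end of this section,'' but no such proof actually appears in the source, so your argument can only be measured against the machinery the paper sets up. That machinery is the functional calculus: since by \eqref{eq:riesz_spectral-projection-browder} the spectral projection $P_{\setof{\lambda}}$ has range $\ker(T-\lambda I)^p$ and kernel $\im(T-\lambda I)^p$, and a projection is determined by its range and kernel, the constructed $P$ equals $P_{\setof{\lambda}}$, after which isolatedness and $\sigma(T|_{\ker(P)}) = \sigma(T)\setminus\setof{\lambda}$ follow from standard Riesz theory. You instead argue directly from \autoref{thm:space_decomposition_ker_im}: both summands of $X = \im(P)\oplus\ker(P)$ are $T$-invariant, the computation $((T-TP)-\lambda I)x = -\lambda u + (T-\lambda I)v$ block-diagonalizes the operator, the second block $(T-\lambda I)|_{\ker(P)}$ is surjective by finite descent ($\im(T-\lambda I)^{p+1} = \im(T-\lambda I)^p$) and injective by \eqref{eq:finite_asc-desc_properties}, and the open mapping theorem on the closed subspace $\im(T-\lambda I)^p$ yields a bounded inverse, assembled explicitly as $-\lambda^{-1}P + ((T-\lambda I)|_{\ker(P)})^{-1}(I-P)$. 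Every step checks out, and this route is more elementary: no contour integrals, no identification of $P$ with $P_{\setof{\lambda}}$ needed (your use of that identification is confined to a sanity check), at the cost of redoing by hand what spectral-set theory gives for free. One substantive point you handle well and that deserves emphasis: the lemma as stated omits the hypothesis $\lambda \neq 0$, yet your first block shows it is indispensable --- for $\lambda = 0$ the operator $T - TP$ annihilates the nonzero space $\im(P) \supseteq \ker(T)$, so $0 \in \sigma(T-TP)$ and the conclusion fails. Your observation that $\lambda \neq 0$ holds automatically for the peripheral eigenvalues (where $\abs{\lambda} = r(T) \geq 1$) to which the lemma is applied in \autoref{thm:iterates_limit} is exactly the right repair, though strictly it should be added to the lemma's hypotheses.
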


Now we can show the following result:
\begin{theorem}
  \label{thm:iterates_limit}
  Let $T \in \boundope{X}$ with $r(T) = \normop{T} = 1$ satisfying the spectral condition
  $\sigma(T) \subset \uballo \cup \setof{1}$.
  Then $T$ is quasi-compact if and only if
  \begin{equation*}
    \lim_{m \to \infty} \normop{T^m - P} = 0,
  \end{equation*}
  where $P \in \compope{X}$ is a finite-rank projection with $TP = PT = P$.
\end{theorem}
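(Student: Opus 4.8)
The plan is to prove the two implications separately, working throughout with $\lambda = 1$ and exploiting that under the standing hypotheses $1$ is forced to be the unique peripheral spectral value.

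For the direction that quasi-compactness implies norm convergence, I would first locate the eigenvalue $1$. Since $\specrad{T} = \normop{T} = 1$ and $\spec{T}$ is compact, the spectral radius is attained at some $\lambda_0 \in \spec{T}$ with $\abs{\lambda_0} = 1$; the hypothesis $\spec{T} \subset \uballo \cup \setof{1}$ then forces $\lambda_0 = 1$, so $1$ is the only spectral value on the unit circle. Invoking \autoref{lem:quasicomp_browder} (with $\specrad{T} = 1$), quasi-compactness makes $1$ an isolated eigenvalue with $T - I$ a Browder operator, and since $r(T)^{-n}\normop{T^n} = \normop{T^n} \le 1$ for all $n$, \autoref{lem:periph_ascent_one} gives that $T - I$ has ascent one. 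By \autoref{thm:ascent_one_projection} this produces a finite-rank projection $P \in \compope{X}$ with $TP = PT = P$; matching $\im(P) = \ker(T - I)$ and $\ker(P) = \im(T - I)$ against \eqref{eq:riesz_spectral-projection-browder} identifies $P$ as the Riesz spectral projection for the spectral set $\setof{1}$.

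The core estimate is then $\specrad{T - TP} < 1$. I would set $S := T - TP = T(I - P)$ and use that $P$ commutes with $T$ to split $X = \im(P) \oplus \ker(P)$ into $T$-invariant subspaces. On $\im(P)$ one has $I - P = 0$, so $S$ vanishes there, while on $\ker(P)$ one has $S = T|_{\ker(P)}$; the Riesz decomposition of the spectrum therefore gives $\spec{S} = \setof{0} \cup \spec{T|_{\ker(P)}} = \setof{0} \cup (\spec{T} \setminus \setof{1})$. Because $1$ is isolated, $\spec{T} \setminus \setof{1}$ is a closed, hence compact, subset of the open disk $\uballo$, so $\sup\cset{\abs{\mu}}{\mu \in \spec{S}} < 1$, that is $\specrad{S} < 1$. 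Applying \autoref{lem:powers_ascent_one} with $\lambda = 1$ yields $T^m - P = (T - TP)^m = S^m$, and Gelfand's formula $\lim_m \normop{S^m}^{1/m} = \specrad{S} < 1$ gives $\normop{T^m - P} \to 0$.

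For the converse I would argue in the Calkin algebra $\boundope{X}/\compope{X}$. Assuming $\normop{T^m - P} \to 0$ for the finite-rank (hence compact) projection $P$, the image $\dot{T}$ of $T$ satisfies $\dot{T}^m = \dot{(T^m)} = \dot{(T^m - P)}$ since $\dot{P} = 0$, whence $\normop{\dot{T}^m} \le \normop{T^m - P} \to 0$; therefore $\specradess{T} = \lim_m \normop{\dot{T}^m}^{1/m} = 0 < 1$, so $T$ is quasi-compact. The main obstacle is the core estimate $\specrad{T - TP} < 1$: the spectral hypothesis alone only places $\spec{T} \setminus \setof{1}$ inside the open disk, which need not keep its supremum of moduli below $1$, so it is precisely the isolation of $1$ — supplied by quasi-compactness through \autoref{lem:quasicomp_browder} — that makes $\spec{T} \setminus \setof{1}$ compact and thus bounded away from the unit circle. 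I would be careful to pass through the Riesz decomposition of the spectrum rather than \autoref{lem:spectral_removal} alone, since the latter only removes the single point $1$ and does not by itself deliver a radius strictly below $1$.
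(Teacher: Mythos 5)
Your proof is correct and follows essentially the same route as the paper's: the forward direction uses the identical chain (\autoref{lem:quasicomp_browder} to make $1$ an isolated eigenvalue with $T-I$ Browder, \autoref{lem:periph_ascent_one} via $\normop{T^n}\leq 1$ for ascent one, the projection with $TP=PT=P$, \autoref{lem:powers_ascent_one} to write $T^m-P=(T-TP)^m$, and $r(T-TP)<1$ plus Gelfand), while your Calkin-algebra argument is just a fleshed-out version of the converse the paper dispatches with ``clearly.'' One small overstatement in your converse: $\normop{\dot{T}^m}\to 0$ does \emph{not} give $\specradess{T}=\lim_m\normop{\dot{T}^m}^{1/m}=0$ (the $m$-th roots of a null sequence need not tend to $0$; for instance $T$ acting as the identity on a one-dimensional summand and as $\tfrac12 I$ on an infinite-dimensional complement has $T^m\to P$ in norm yet $\specradess{T}=\tfrac12$), but your displayed inequality already yields $\normop{\dot{T}^{m_0}}<1$ for some $m_0$, hence $\specradess{T}\leq\normop{\dot{T}^{m_0}}^{1/m_0}<1$, which is all quasi-compactness requires. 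Also, your caution about \autoref{lem:spectral_removal} is unnecessary: since $\sigma(T-TP)$ is automatically compact, removing the single point $1$ from a set contained in $\uballo\cup\setof{1}$ already forces the spectral radius strictly below $1$, which is exactly the paper's (valid) shortcut.
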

\begin{proof}
  Clearly, if the iterates $T^m$ converge to a finite-rank operator,
  then $T$ is a quasi-compact operator.

  Now let $T$ be quasi-compact with $\sigma(T) \subset \uballo \cup \setof{1}$, 
  then $r(T) = 1$ and $1$ is an isolated peripheral eigenvalue. 
  Thus, $T - I$ is Browder with ascent one. We now prove the limit of the iterates.
  By \autoref{thm:space_decomposition_ker_im} the space $X$ has the decomposition
  \begin{equation}
    \label{eq:it_space_decomposition}
    X = \ker(T - I) \oplus \im(T- I).
  \end{equation} 
  Therefore, we can decompose the operator $T$ into
  \begin{equation*}
    T =
    \begin{pmatrix}
      I & 0\\
      0 & S
    \end{pmatrix} \in \linope{\ker(T - I) \oplus \im(T- I)},
  \end{equation*}
  with $S \in \linope{\im(T-I)}$. Then
  \begin{equation*}
    T - P =
    \begin{pmatrix}
      0 & 0\\
      0 & S
    \end{pmatrix},
  \end{equation*}
  where $P$ is the projection operator defined by \autoref{thm:projection_invariants2}.
  Using \autoref{lem:spectral_removal} we derive that 
  $1 \not\in \sigma(S) \subset \uballo \cup \setof{1}$, and hence, 
  $\sigma(S) \subset \uballo$. Therefore, the spectral radius of $S$ is strictly smaller than $1$
  and thus, the iterates $S^m$ converge to $0$ in the operator norm as $m$ tends to infinity. 
  Finally, applying \autoref{lem:powers_ascent_one} we obtain the final result
  \begin{equation*}
    \lim_{m \to \infty}\normop{T^m - P} = \lim_{m \to \infty}\normop{(T - P)^m} \lim_{m \to \infty}\normop{S^m} = 0.
  \end{equation*}
  The iterates $T^m$ converge in the strong topology to the operator $P$, 
  the projection onto the fixpoint space of $T$.
\end{proof}

\begin{corollary}[Convergence Rate]
  \label{cor:iterates_convergence_rate}
  Let $T \in \boundope{X}$ be a quasi-compact operator 
  with $r(T) = \normop{T} = 1$ satisfying the spectral condition $\sigma(T) \subset \uballo \cup \setof{1}$. Define
  \begin{equation*}
    \gamma := \sup\cset{\abs{\gamma}}{\gamma \in \sigma(T) \setminus \setof{1}}.
  \end{equation*}
  Then there exists a constant $1 \leq C \leq \gamma^{-1}$, such that for all $m \in \Nn$
  \begin{equation*}
     \normop{T^m - P} \leq C \cdot \gamma^m,
  \end{equation*}
  where $P \in \compope{X}$ is the operator defined by \autoref{thm:projection_invariants2}.
\end{corollary}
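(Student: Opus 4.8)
The plan is to build directly on the proof of \autoref{thm:iterates_limit} and reduce everything to a bound on the powers of the subdominant part of $T$. Since $T$ is quasi-compact with $\sigma(T)\subset\uballo\cup\setof{1}$ and $r(T)=\normop{T}=1$, the value $1$ is an isolated peripheral eigenvalue and $T-I$ is a Browder operator of ascent one, so $X=\ker(T-I)\oplus\im(T-I)$ and $T$ is block diagonal with blocks $I$ on $\ker(T-I)$ and $S:=T|_{\im(T-I)}$ on $\ker(P)=\im(T-I)$, exactly as in that proof. Applying \autoref{lem:powers_ascent_one} with $\lambda=1$ gives $T^m-P=(T-P)^m$, and in the block decomposition $T-P$ acts as $0$ on $\im(P)$ and as $S$ on $\ker(P)$; hence $\normop{T^m-P}$ is governed by $\normop{S^m}$. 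The entire estimate thus reduces to controlling $\normop{S^m}$ by a geometric sequence.

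Next I would pin down the spectral radius of $S$. By \autoref{lem:spectral_removal} the eigenvalue $1$ is removed, \ie $1\notin\sigma(S)$, while the block-diagonal form yields $\sigma(T)=\setof{1}\cup\sigma(S)$; therefore $\sigma(S)=\sigma(T)\setminus\setof{1}$ and $r(S)=\sup\cset{\abs{z}}{z\in\sigma(T)\setminus\setof{1}}=\gamma<1$. The lower half of the two-sided bound on the constant is then automatic: by Gelfand's formula $r(S)=\inf_m\normop{S^m}^{1/m}$, so $\normop{S^m}\ge\gamma^m$ for every $m$, which forces any admissible $C=\sup_m\normop{S^m}\,\gamma^{-m}$ to satisfy $C\ge1$. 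Moreover $\normop{S}\le\normop{T}=1$ because $S$ is the restriction of $T$ to a $T$-invariant subspace, so at least the first ratio obeys $\normop{S}\,\gamma^{-1}\le\gamma^{-1}$.

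To obtain the rate itself I would use the holomorphic functional calculus: for any radius $\gamma<\rho<1$ the contour $\setof{\abs{z}=\rho}$ encloses $\sigma(S)$, and $S^m=\frac1{2\pi i}\int_{\abs{z}=\rho}z^m(zI-S)^{-1}\,dz$ gives $\normop{S^m}\le\rho^{m+1}\sup_{\abs{z}=\rho}\normop{(zI-S)^{-1}}$. This already proves $\normop{T^m-P}\le C_\rho\,\rho^m$ for every $\rho>\gamma$, \ie convergence at any rate strictly slower than $\gamma$.

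The step I expect to be the main obstacle is upgrading this to the sharp constant $C\le\gamma^{-1}$, equivalently $\normop{S^m}\le\gamma^{m-1}$ for all $m$. The resolvent estimate degenerates as $\rho\downarrow\gamma$, and in general $\normop{S^m}\,\gamma^{-m}$ need not stay bounded: since $\normop{S}\le1$ the sequence $\normop{S^m}$ is only non-increasing, whereas $\gamma^m$ decays geometrically, so the ratio grows and can even diverge like $m^{d}$ if a subdominant peripheral eigenvalue of $S$ on $\setof{\abs{z}=\gamma}$ carries a Jordan block of size $d+1\ge2$. Establishing the clean bound therefore seems to require the extra information that these subdominant peripheral eigenvalues again have ascent one — the analogue of \autoref{lem:periph_ascent_one} applied to $S$ — so that the functional calculus can be localized to finitely many simple peripheral eigenvalues plus a remainder with spectral radius strictly below $\gamma$; only under such a hypothesis do I expect the uniform constant to collapse to $\gamma^{-1}$.
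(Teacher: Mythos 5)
Your reduction coincides with the paper's own proof: the same block decomposition $T = I \oplus S$ on $\ker(T-I)\oplus\im(T-I)$ inherited from \autoref{thm:iterates_limit}, the identity $\normop{T^m - P} = \normop{S^m}$ via \autoref{lem:powers_ascent_one}, and $r(S) = \gamma$ via \autoref{lem:spectral_removal}. Where you stop, however, the paper does not: its entire justification of the constant is the assertion that since $r(S) = \lim_m \normop{S^m}^{1/m}$ there exists $1 \leq C \leq \gamma^{-1}$ with $\normop{S^m} \leq C\gamma^m$ --- precisely the non sequitur you declined to commit. Gelfand's formula yields only your two facts: the lower bound $\normop{S^m} \geq \gamma^m$ (so $C \geq 1$ if such a $C$ exists), and the rate $\rho^m$ for every $\rho > \gamma$ from the contour integral. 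The obstruction you describe is genuine and in fact refutes the corollary as stated: take $X = \Cc^3$ with the maximum norm and
\begin{equation*}
  T = \begin{pmatrix} 1 & 0 & 0\\ 0 & \gamma & 1-\gamma\\ 0 & 0 & \gamma \end{pmatrix},
  \qquad 0 < \gamma < 1.
\end{equation*}
Then $\normop{T} = r(T) = 1$, $\sigma(T) = \setof{1,\gamma} \subset \uballo \cup \setof{1}$, $T$ is finite-rank and hence quasi-compact, $P = \mathrm{diag}(1,0,0)$, and
\begin{equation*}
  \normop{T^m - P} = \gamma^m + m(1-\gamma)\gamma^{m-1},
\end{equation*}
so $\gamma^{-m}\normop{T^m - P} \to \infty$: no finite $C$ achieves the exact rate $\gamma^m$, let alone $C \leq \gamma^{-1}$. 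Note that \autoref{lem:periph_ascent_one} is no help here, since it controls ascent only at eigenvalues of modulus $r(T) = 1$, not at the subdominant modulus $\gamma$, exactly as you observed.

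The provable content is therefore what your argument actually delivers: $\normop{T^m - P} \leq C_\rho\,\rho^m$ for every $\rho \in (\gamma,1)$, equivalently $\lim_m \normop{T^m - P}^{1/m} = \gamma$, together with the lower bound $\normop{T^m - P} \geq \gamma^m$. Your proposed repair (finitely many semisimple eigenvalues on $\cset{z}{\abs{z}=\gamma}$, spectrally separated from the rest of $\sigma(S)$) would restore $\sup_m \gamma^{-m}\normop{S^m} < \infty$, but even then the constant is governed by the norms of the attendant spectral projections and need not satisfy $C \leq \gamma^{-1}$; in any corrected statement the numerical bound on $C$ should simply be dropped. In short, your proposal is sound where it commits itself, and the step you flagged as the main obstacle is exactly the step the paper asserts without proof --- and it is false in general.
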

\begin{proof}
  According to the proof of \autoref{thm:iterates_limit} we decompose 
  \begin{equation*}
    T = 
    \begin{pmatrix}
      I & 0\\
      0 & S
    \end{pmatrix} \in \boundope{\ker(T - I) \oplus \im(T - I)}.
  \end{equation*}
  Furthermore, we have that $\sigma(S) \subset \uballo$ and therefore we obtain
  $r(S) = \gamma < 1$. As $r(S) = \lim_{m\to \infty}\norm{S^m}^{1/m}$, we obtain that
  there exists a constant $1 \leq C \leq \gamma^{-1}$ such that
  \begin{equation*}
    \norm{S^m} \leq C \cdot \gamma^m
  \end{equation*}
  for every $m \in \Nn$.
\end{proof}

If a sequences of operators with the spectrum contained in $\uballo \cup \setof{1}$ 
share the same fixpoints spaces, the following limit theorem hold.
\begin{corollary}
  Let $T_n \in \boundope{X}$ be a sequence of continuous linear operators with 
  $\sigma(T_n) \subset \uballo \cup \setof{1}$ such that $T_n - \lambda I \in \browdope{X}$ 
  with ascent one. Furthermore, assume that $\ker(T_n - I)$ and $\ker(T^*_n - I)$ 
  are equal for all $n \in \Nn$.
  If $(k_n)_{n\in\Nn} \subset \Nn$ is a strictly increasing sequence of positive integers, then
  \begin{equation*}
    \lim_{n\to\infty}\normop{T_n^{{k_n}} - P} = 0,
  \end{equation*}
  where $P \in \compope{X}$ is the operator defined by \autoref{thm:projection_invariants2}.
\end{corollary}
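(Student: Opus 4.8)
The plan is to reduce the doubly-indexed limit to the decay of a single compression on a fixed complement, and then to apply the convergence-rate estimate. First I would observe that the projection $P$ is literally the same operator for every $n$: by hypothesis the eigenspaces $\ker(T_n - I)$ all coincide, and so do the adjoint eigenspaces $\ker(T_n^* - I)$, so the Gram matrix $G = \Phi^*\Phi$ formed from their fixed normalized bases, its inverse $A = G^{-1}$, and hence $P = \Phi A \Phi^*$ from \autoref{thm:projection_invariants2}, do not depend on $n$. By \autoref{thm:space_decomposition_ker_im} (with ascent $p = 1$) this gives $\im(P) = \ker(T_n - I)$ and $\ker(P) = \im(T_n - I)$ for every $n$, so the topological direct sum
\[
  X = \im(P) \oplus \ker(P)
\]
is one and the same splitting for all members of the sequence.

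Next I would diagonalize each $T_n$ against this common splitting. Since $\im(P)$ and $\ker(P)$ are $T_n$-invariant, $T_n = I \oplus S_n$ with $S_n := T_n|_{\ker(P)}$, and \autoref{lem:spectral_removal} gives $1 \notin \sigma(T_n - T_n P)$, whence $\sigma(S_n) \subseteq \sigma(T_n)\setminus\setof{1} \subset \uballo$ and $r(S_n) < 1$. Because $T_n - I$ has ascent one we have $T_n P = P T_n = P$ (\autoref{thm:ascent_one_projection}), so \autoref{lem:powers_ascent_one} yields $T_n^{k_n} - P = (T_n - T_n P)^{k_n}$, which against the splitting equals $0 \oplus S_n^{k_n}$. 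As $P$ is fixed, $\normop{I - P}$ is a constant independent of $n$ and $\normop{T_n^{k_n} - P} \leq \normop{I - P}\,\normop{S_n^{k_n}}$; the entire problem therefore collapses to proving $\normop{S_n^{k_n}} \to 0$.

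To estimate $\normop{S_n^{k_n}}$ I would apply \autoref{cor:iterates_convergence_rate} to each $T_n$ separately: with $\gamma_n := \sup\cset{\abs{\mu}}{\mu \in \sigma(T_n)\setminus\setof{1}} = r(S_n) < 1$ it supplies a constant $1 \leq C_n \leq \gamma_n^{-1}$ with $\normop{S_n^{k_n}} \leq C_n\,\gamma_n^{k_n} \leq \gamma_n^{\,k_n - 1}$, where $k_n \geq 2$ holds eventually because $(k_n)$ is strictly increasing.

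The step I expect to be the main obstacle is that this per-operator rate is not automatically uniform: $\gamma_n$ may tend to $1$, in which case $\gamma_n^{\,k_n - 1}$ need not vanish. Indeed, taking $\ker(P)$ one-dimensional with scalar compression $S_n = 1 - \tfrac{1}{n}$ and $k_n = n$ gives $\normop{S_n^{k_n}} = (1 - \tfrac{1}{n})^n \to e^{-1} \neq 0$, so the conclusion genuinely fails without further control. The proof is completed precisely by a uniform spectral-gap bound $\gamma_0 := \sup_n \gamma_n < 1$, which I would either extract from the ambient framework or add explicitly to the hypotheses; under it $\normop{T_n^{k_n} - P} \leq \normop{I - P}\,\gamma_0^{\,k_n - 1} \to 0$ since $k_n \to \infty$, and the corollary follows. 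Accordingly, most of the work lies in securing this uniform gap rather than in the algebraic reduction.
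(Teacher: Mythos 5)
Your algebraic reduction is exactly the paper's own argument: the paper also treats $P$ as one fixed operator (since the fixed-point spaces of $T_n$ and $T_n^*$ do not depend on $n$), writes $\normop{T_n^{k_n} - P} = \normop{S_n^{k_n}} \leq C\gamma_n^{k_n}$ via \autoref{cor:iterates_convergence_rate}, and then concludes from ``$\gamma_n < 1$ for all $n \in \Nn$'' that the bound vanishes. The obstacle you isolate is therefore not a defect of your write-up but a genuine gap in the paper's proof: pointwise $\gamma_n < 1$ does not give $\gamma_n^{k_n} \to 0$ when $\gamma_n \to 1$, and the paper even suppresses the $n$-dependence of the constant, writing a single $C$ where its rate corollary only supplies $C_n \leq \gamma_n^{-1}$.

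Your counterexample is correct and can be realized within every stated hypothesis: on $X = \Cc^2$ take $T_n = \mathrm{diag}\left(1, 1 - \tfrac{1}{n}\right)$ and $k_n = n$. Then $\sigma(T_n) = \setof{1, 1 - \tfrac1n} \subset \uballo \cup \setof{1}$, $T_n - I$ is Browder with ascent one, $\ker(T_n - I)$ and $\ker(T_n^* - I)$ are the first coordinate axis and functional for every $n$, $P$ is the corresponding coordinate projection, yet $\normop{T_n^{k_n} - P} = \left(1 - \tfrac1n\right)^n \to e^{-1} \neq 0$. So the corollary as stated is false, and your repair --- the uniform spectral gap $\gamma_0 := \sup_n \gamma_n < 1$, or any uniform bound $\normop{S_n^m} \leq C\gamma_0^m$ --- is precisely the hypothesis the paper's proof silently assumes; with it your chain $\normop{T_n^{k_n} - P} \leq \normop{I - P}\,\normop{S_n^{k_n}} \leq \normop{I - P}\,\gamma_0^{k_n - 1} \to 0$ closes the argument. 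Two side remarks: even per operator, the bound with $C_n \leq \gamma_n^{-1}$ at exactly the rate $\gamma_n$ is not justified in general (the spectral radius formula yields $\normop{S^m} \leq C_\rho \rho^m$ only for $\rho > r(S)$), though this is immaterial here since your scalar example has $C_n = 1$; and the statement's ``$T_n - \lambda I \in \browdope{X}$'' should read $T_n - I$.
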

\begin{proof}
  Follows directly by applying \autoref{cor:iterates_convergence_rate}.
  We derive for $n \in \Nn$ that
  \begin{equation*}
    \norm{T_n^{{k_n}} - P} = \norm{S_n^{{k_n}}} \leq C \gamma_n^{{k_n}}
  \end{equation*}
  where
  \begin{equation*}
    \gamma_n := \sup\cset{\abs{\gamma}}{\gamma \in \sigma(T_n) \setminus \setof{1}}.
  \end{equation*}
  As $1$ is an isolated eigenvalue, $\gamma_n < 1$ for all $n \in \Nn$ and therefore,
  $\norm{T_n^{k_n} - P} \to 0$ if $n$ tends to infinity.
\end{proof}

Finally we discuss the case, when the peripheral spectrum is cyclic.
\begin{theorem}
  \label{thm:iterates_limit_cyclic}
  Let $T \in \boundope{X}$ be a quasi-compact operator with $r(T) = \normop{T} = 1$
  with a non-trivial fixed point space.
  Furthermore, we assume the peripheral spectrum to be finite and cyclic.
  Then there exists $k \in \Nn$ such that
  \begin{equation*}
    \lim_{m \to \infty} \normop{T^{km} - P} = 0,
  \end{equation*}
  where $P \in \compope{X}$ is the operator defined by \autoref{thm:projection_invariants2} for 
  applied to the operator $T^k$.
\end{theorem}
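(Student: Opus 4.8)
The plan is to reduce the assertion to \autoref{thm:iterates_limit} by replacing $T$ with a suitable power. Since the peripheral spectrum $\specper{T}$ is finite and cyclic and $1 \in \specper{T}$ (the fixed point space of $T$ being non-trivial), it is, exactly as in the two cases discussed immediately before the theorem, the group of $l$-th roots of unity for some $l \in \Nn$; in particular every peripheral eigenvalue is a root of unity. Hence there exists $k \in \Nn$ — for instance $k = l$, the cardinality of $\specper{T}$ — such that $\mu^k = 1$ for every $\mu \in \specper{T}$. I would fix such a $k$ and set $S := T^k$, the idea being that raising $T$ to this power collapses the entire peripheral spectrum onto the single point $1$, bringing us back into the single-peripheral-eigenvalue situation already treated.

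Next I would verify that $S$ meets the three hypotheses of \autoref{thm:iterates_limit}. For the normalisation, the spectral radius is multiplicative under powers, so $\specrad{S} = \specrad{T}^k = 1$, and then $1 = \specrad{S} \leq \normop{S} \leq \normop{T}^k = 1$ forces $\normop{S} = \specrad{S} = 1$. For quasi-compactness, I would use that the essential spectral radius is likewise multiplicative, $\specradess{S} = \specradess{T}^k < 1$, which holds because $\specradess{T}$ is the spectral radius of the image of $T$ in the Calkin algebra $\boundope{X}/\compope{X}$, to which the ordinary spectral mapping theorem applies. For the spectral condition, the spectral mapping theorem gives $\spec{S} = \cset{\mu^k}{\mu \in \spec{T}}$; every $\mu$ with $\abs{\mu} < 1$ contributes $\abs{\mu^k} < 1$, while every peripheral $\mu$ contributes $\mu^k = 1$ by the choice of $k$, so that $\spec{S} \subset \uballo \cup \setof{1}$. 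Moreover $1 \in \spec{S}$, since any fixed point of $T$ is a fixed point of $S$, whence the fixed point space of $S$ is again non-trivial.

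With these facts in hand, applying \autoref{thm:iterates_limit} to the quasi-compact operator $S = T^k$ produces a finite-rank projection $P \in \compope{X}$ with $SP = PS = P$ — namely the projection supplied by \autoref{thm:projection_invariants2} for the operator $T^k$ — such that $\lim_{m \to \infty}\normop{S^m - P} = 0$, which is precisely $\lim_{m \to \infty}\normop{T^{km} - P} = 0$, the claimed conclusion. The only genuinely delicate point is the transfer of the hypotheses from $T$ to $S$, and within it the multiplicativity of the essential spectral radius $\specradess{T^k} = \specradess{T}^k$; once this and the elementary spectral mapping theorem for $\spec{T^k}$ are secured, the spectral condition $\spec{S} \subset \uballo \cup \setof{1}$ required to invoke \autoref{thm:iterates_limit} follows at once and the rest is routine.
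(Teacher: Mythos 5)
Your proposal is correct and follows essentially the same route as the paper: pass to the power $S = T^k$ that sends every peripheral root of unity to $1$, check that $S$ is quasi-compact with $r(S) = \normop{S} = 1$ and $\sigma(S) \subset \uballo \cup \setof{1}$, and apply \autoref{thm:iterates_limit}; your explicit verifications (multiplicativity of the essential spectral radius via the Calkin algebra, the full spectral mapping theorem for $\sigma(T^k)$) merely spell out what the paper asserts tersely. One small caution: a finite cyclic peripheral set containing $1$ need not be a single group of $l$-th roots of unity --- it can be a union of root groups of different orders, e.g.\ $\setof{1, -1, e^{2\pi i/3}, e^{4\pi i/3}}$ --- so your instance $k = l$ can fail, and you should take $k$ to be the least common multiple of the orders of the peripheral eigenvalues (the paper defensively uses $k = l!$), after which your argument goes through unchanged since it only uses $\mu^k = 1$ for every peripheral $\mu$.
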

\begin{proof}
  As the peripheral spectrum is finite and cyclic and $1 \in \sigma_{per}(T)$,
  the spectrum contains only roots of unity. Let us denote by $l$ the number of 
  spectral values contained in the spectrum. Then
  \begin{equation*}
    \sigma_{per}(T) = \cset{\rho_l^k}{k \in \setof{1,\ldots,l}},
  \end{equation*}
  where $\rho_l$ is the $l$-th root of unity.
  By the spectral mapping theorem for the point spectrum, 
  see \eg, \citet[Theorem 10.33]{rudin1991} we conclude that for the integer
  $k := l(l-1)\cdots2\cdot1$ the peripheral spectrum of $T^k$ contains only the eigenvalue $1$.
  As $T^k$ is also quasi-compact, we can derive the result by \autoref{thm:iterates_limit}
  applied to $T^k$.
\end{proof}

It is easy to see that quasi-compact Markov operators always satisfy the conditions of
\autoref{thm:iterates_limit_cyclic}. For that case, we derive the corollary:
\begin{corollary}
  Let $T \in \linope{X}$ be a quasi-compact Markov operator with $\normop{T} = 1$. 
  Then there exists $k \in \Nn$ such that
  \begin{equation*}
    \lim_{m \to \infty} \normop{T^{km} - P} = 0,
  \end{equation*}
  where $P \in \compope{X}$ is the operator defined by \autoref{thm:projection_invariants2} for 
  applied to the operator $T^k$.
\end{corollary}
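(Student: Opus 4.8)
The plan is to check that a quasi-compact Markov operator $T$ meets every hypothesis of \autoref{thm:iterates_limit_cyclic} and then to invoke that theorem verbatim. Recall that a Markov operator on $C(X)$ is a positive contraction satisfying $T1 = 1$, and that $C(X)$ is a Banach lattice; these two structural features---positivity and the normalisation $T1=1$---are exactly what drive the argument, so the whole proof amounts to translating them into the language of \autoref{thm:iterates_limit_cyclic}.

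First I would establish the spectral normalisation. Since $T1 = 1$, the constant function is a fixed point, so $1 \in \specp{T}$ and in particular $\ker(T - I) \neq \setof{0}$; hence the fixed point space is non-trivial. From $1 \in \spec{T}$ we obtain $r(T) \geq 1$, while $r(T) \leq \normop{T} = 1$ holds for any bounded operator, so $r(T) = \normop{T} = 1$, precisely as required.

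Next I would verify that the peripheral spectrum is finite and cyclic. Finiteness is immediate from \autoref{lem:quasicomp_browder}: a quasi-compact operator with $r(T) \geq 1$ has only finitely many eigenvalues of modulus $r(T)$, and every peripheral spectral value is an isolated eigenvalue for which $T - \lambda I$ is a Browder operator. Cyclicity is where positivity enters: since $T$ is a positive quasi-compact operator on the Banach lattice $C(X)$ with $r(T) = \normop{T} = 1$, the results of Krein and Lotz recalled earlier (the corollary on positive quasi-compact operators) guarantee that $\specper{T}$ is cyclic and consists only of roots of unity. Combining the two observations, $\specper{T}$ is finite and cyclic.

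With all hypotheses of \autoref{thm:iterates_limit_cyclic} verified, that theorem produces an integer $k \in \Nn$---concretely $k = l!$ where $l = \abs{\specper{T}}$---for which $T^k$ has $1$ as its only peripheral eigenvalue, whence $\lim_{m\to\infty}\normop{T^{km} - P} = 0$ with $P$ the projection of \autoref{thm:projection_invariants2} applied to $T^k$. The only genuinely non-trivial ingredient is the cyclicity of the peripheral spectrum; this is where the lattice structure of $C(X)$ and the positivity of $T$ are indispensable, since without them the peripheral eigenvalues need not form a cyclic group and the passage to the power $T^k$ would break down. Everything else is a direct bookkeeping check of the definition of a Markov operator, which is why the statement can fairly be called an immediate consequence.
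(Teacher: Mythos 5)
Your proposal is correct and takes essentially the same route as the paper, which disposes of this corollary with the single remark that quasi-compact Markov operators satisfy the conditions of \autoref{thm:iterates_limit_cyclic}; your verification (non-trivial fixed point space from $T1=1$, $r(T)=\normop{T}=1$, finiteness of $\specper{T}$ via \autoref{lem:quasicomp_browder}, cyclicity via the Krein--Lotz corollary for positive operators on a Banach lattice) simply spells out what the paper leaves implicit. No gaps.
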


\section*{References}
\bibliographystyle{plainnat}
\bibliography{references}

\begin{thebibliography}{22}
\providecommand{\natexlab}[1]{#1}
\providecommand{\url}[1]{\texttt{#1}}
\expandafter\ifx\csname urlstyle\endcsname\relax
  \providecommand{\doi}[1]{doi: #1}\else
  \providecommand{\doi}{doi: \begingroup \urlstyle{rm}\Url}\fi

\bibitem[Altomare et~al.(2014)Altomare, Cappelletti~Montano, Leonessa, and
  Ra{\c{s}}a]{altomare2014}
F.~Altomare, M.~Cappelletti~Montano, V.~Leonessa, and I.~Ra{\c{s}}a.
\newblock On differential operators associated with {M}arkov operators.
\newblock \emph{J. Funct. Anal.}, 266\penalty0 (6):\penalty0 3612--3631, 2014.

\bibitem[Altomare(2013)]{altomare2013}
Francesco Altomare.
\newblock On some convergence criteria for nets of positive operators on
  continuous function spaces.
\newblock \emph{J. Math. Anal. Appl.}, 398\penalty0 (2):\penalty0 542--552,
  2013.

\bibitem[Browder(1960/1961)]{browder1961}
Felix~E. Browder.
\newblock On the spectral theory of elliptic differential operators. {I}.
\newblock \emph{Math. Ann.}, 142:\penalty0 22--130, 1960/1961.

\bibitem[Christensen(2001)]{christensen2001}
Ole Christensen.
\newblock Frames, {R}iesz bases, and discrete {G}abor/wavelet expansions.
\newblock \emph{Bull. Amer. Math. Soc. (N.S.)}, 38\penalty0 (3):\penalty0
  273--291 (electronic), 2001.

\bibitem[Dunford(1943)]{Dunford:1943}
Nelson Dunford.
\newblock Spectral theory. {I}. {C}onvergence to projections.
\newblock \emph{Trans. Amer. Math. Soc.}, 54:\penalty0 185--217, 1943.

\bibitem[Gavrea and Ivan(2011{\natexlab{a}})]{gavrea2011}
I.~Gavrea and M.~Ivan.
\newblock Asymptotic behaviour of the iterates of positive linear operators.
\newblock \emph{Abstr. Appl. Anal.}, page~11, 2011{\natexlab{a}}.

\bibitem[Gavrea and Ivan(2011{\natexlab{b}})]{gavrea2011b}
I.~Gavrea and M.~Ivan.
\newblock On the iterates of positive linear operators.
\newblock \emph{J. Approx. Theory}, 163\penalty0 (9):\penalty0 1076--1079,
  2011{\natexlab{b}}.

\bibitem[Hennion and Herv{\'e}(2001)]{hennion2001}
Hubert Hennion and Lo{\"{\i}}c Herv{\'e}.
\newblock \emph{Limit theorems for {M}arkov chains and stochastic properties of
  dynamical systems by quasi-compactness}, volume 1766 of \emph{Lecture Notes
  in Mathematics}.
\newblock Springer-Verlag, Berlin, 2001.

\bibitem[Heuser(1982)]{heuser1982}
Harro~G. Heuser.
\newblock \emph{Functional analysis}.
\newblock John Wiley \& Sons, Ltd., Chichester, 1982.

\bibitem[Karlin and Ziegler(1970)]{karlin1970}
S.~Karlin and Z.~Ziegler.
\newblock Iteration of positive approximation operators.
\newblock \emph{J. Approximation Theory}, 3:\penalty0 310--339, 1970.

\bibitem[Katznelson and Tzafriri(1986)]{katznelson1986}
Y.~Katznelson and L.~Tzafriri.
\newblock On power bounded operators.
\newblock \emph{J. Funct. Anal.}, 68\penalty0 (3):\penalty0 313--328, 1986.

\bibitem[Kelisky and Rivlin(1967)]{kelisky1967}
R.~P. Kelisky and T.~J. Rivlin.
\newblock Iterates of {B}ernstein polynomials.
\newblock \emph{Pacific J. Math.}, 21:\penalty0 511--520, 1967.

\bibitem[Kre{\u i}n and Rutman(1948)]{krein1948}
M.~G. Kre{\u i}n and M.~A. Rutman.
\newblock Linear operators leaving invariant a cone in a {B}anach space.
\newblock \emph{Uspehi Matem. Nauk (N. S.)}, 3\penalty0 (1(23)):\penalty0
  3--95, 1948.

\bibitem[Lay(1968)]{lay1968}
David~C. Lay.
\newblock Characterizations of the essential spectrum of {F}. {E}. {B}rowder.
\newblock \emph{Bull. Amer. Math. Soc.}, 74:\penalty0 246--248, 1968.

\bibitem[Lotz(1968)]{lotz1968}
Heinrich~P. Lotz.
\newblock \"{U}ber das {S}pektrum positiver {O}peratoren.
\newblock \emph{Math. Z.}, 108:\penalty0 15--32, 1968.

\bibitem[Lotz(1981)]{lotz1981}
Heinrich~P. Lotz.
\newblock Uniform ergodic theorems for {M}arkov operators on {$C(X)$}.
\newblock \emph{Math. Z.}, 178\penalty0 (2):\penalty0 145--156, 1981.

\bibitem[Nagler et~al.(2016)Nagler, Cerejeiras, and Forster]{nagler2015joc}
J.~Nagler, P.~Cerejeiras, and B.~Forster.
\newblock {Lower bounds for the approximation with variation-diminishing
  splines}.
\newblock \emph{Journal of Complexity}, 32:\penalty0 81--91, 2016.

\bibitem[Nagler(2015)]{nagler2014fr}
Johannes Nagler.
\newblock On the spectrum of positive linear operators with a partition of
  unity property.
\newblock \emph{Journal of Mathematical Analysis and Applications},
  425\penalty0 (1):\penalty0 249 -- 258, 2015.

\bibitem[Rudin(1991)]{rudin1991}
Walter Rudin.
\newblock \emph{Functional analysis}.
\newblock International Series in Pure and Applied Mathematics. McGraw-Hill,
  Inc., New York, 2nd edition, 1991.

\bibitem[Ruston(1986)]{ruston1986}
Anthony~F. Ruston.
\newblock \emph{Fredholm theory in {B}anach spaces}, volume~86 of
  \emph{Cambridge Tracts in Mathematics}.
\newblock Cambridge University Press, Cambridge, 1986.

\bibitem[Schechter(1966)]{schechter1966}
Martin Schechter.
\newblock On the essential spectrum of an arbitrary operator. {I}.
\newblock \emph{J. Math. Anal. Appl.}, 13:\penalty0 205--215, 1966.

\bibitem[Sine(1970)]{Sine:1970}
Robert Sine.
\newblock A mean ergodic theorem.
\newblock \emph{Proc. Amer. Math. Soc.}, 24:\penalty0 438--439, 1970.

\end{thebibliography}

\end{document}